\DeclarePairedDelimiterX{\segN}[2]{\dlb}{\drb}{{#1}\,{,}\,{#2}}
\DeclarePairedDelimiterX{\Iintv}[1]{\llbracket}{\rrbracket}{\iintvargs{#1}}
\NewDocumentCommand{\iintvargs}{>{\SplitArgument{1}{,}}m}{\iintvargsaux#1}
\NewDocumentCommand{\iintvargsaux}{mm} {#1\mkern1.5mu..\mkern1.5mu#2}
\newtheorem{theorem}{Theorem}
\newtheorem{proposition}[theorem]{Proposition}
\newtheorem{definition}[theorem]{Definition} 
\newtheorem{corollary}[theorem]{Corollary} 
\newtheorem{lemma}[theorem]{Lemma} 
\newtheorem{remark}[theorem]{Remark}
\begin{document}

\title{Regional Gradient Observability for Fractional Differential Equations with Caputo Time-Fractional Derivatives}

\author{Khalid Zguaid$^{1}$\\ 
\url{https://orcid.org/0000-0003-3027-8049}\\
\texttt{zguaid.khalid@gmail.com} 
\and Fatima-Zahrae El Alaoui$^{1}$\\ 
\url{https://orcid.org/0000-0001-8912-4031}\\
\texttt{fzelalaoui2011@yahoo.fr}
\and Delfim F. M. Torres$^{2,}$\thanks{Corresponding author.
Telephone: +351 234 370 668; fax: +351 234 370 066; e-mail: delfim@ua.pt\newline
This is a preprint of a paper whose final and definite form is published 
in 'Int. J. Dyn. Control' (ISSN 2195-268X).\newline 
Submitted 11/July/2022; Revised 07/Nov/22; and Accepted 26/Dec/2022.}\\ 
\url{https://orcid.org/0000-0001-8641-2505}\\
\texttt{delfim@ua.pt}}

\date{$^{1}$TSI Team, Department of Mathematics, Faculty of Sciences,\\
Moulay Ismail University, 11201 Meknes, Morocco\\[0.3cm]
$^{2}$\mbox{Center for Research and Development in Mathematics and Applications (CIDMA),}\\
Department of Mathematics, University of Aveiro, 3810-193 Aveiro, Portugal}
  	
\maketitle


\begin{abstract}
We investigate the regional gradient observability
of fractional sub-diffusion equations involving the Caputo derivative. 
The problem consists of describing a method to find and recover 
the initial gradient vector in the desired region, which 
is contained in the spacial domain. After giving necessary 
notions and definitions, we prove some useful characterizations 
for exact and approximate regional gradient observability. 
An example of a fractional system that is not (globally) 
gradient observable but it is regionally 
gradient observable is given, showing
the importance of regional analysis.
Our characterization of the notion of regional gradient observability
is given for two types of strategic sensors. 
The recovery of the initial gradient is carried out
using an expansion of the Hilbert Uniqueness Method. Two illustrative
examples are given to show the application of the developed approach. 
The numerical simulations confirm that the proposed algorithm is
effective in terms of the reconstruction error.   

\medskip

\noindent \textbf{Keywords:} Distributed parameter systems; Control theory;
Fractional calculus; Regional analysis; 
Gradient observability; Gradient strategic sensors.
\end{abstract}


\section{Introduction}

The investigation of distributed parameter systems (DPS) drives 
many useful concepts in science and engineering, including the 
well-known notions of stability, controllability and observability
\cite{curtain.1978,zwart.1995,eljai.1997,eljai.2012,loins.1997,weiss.2009}. 
These concepts allow one to have a better understanding of the investigated system, 
enhancing the ability to control it. All these notions are important and have their 
particularities, but here we only focus on the concept of observability, 
which was firstly introduced by Kalman, for finite dimensional systems, 
in 1960 \cite{kalman.1960}. The goal is to recover an initial unknown state
using the output parameters or the measurements
of the considered system. After the pioneer work of Kalman, the concept 
of observability was also developed to cover infinite dimensional systems
\cite{magnusson.1984,wang.1982}. 

In the nineties of the 20th century, El Jai and others introduced
a more general notion called regional observability
\cite{amouroux.1994,simon.1993,zerrik.1993}.
Its main objective is to find and recuperate the unknown initial vector 
of the studied distributed parameter system, but only in a partial region 
of the spacial domain. The key advantage of regional observability becomes 
clear when the considered system is not (globally) observable in the 
whole spatial domain. In such cases, the studied system can be regionally 
observable in some well chosen sub-region. Thus, we can at least partially recover 
the initial state, which might be useful in many areas of science \cite{MR4376818}.

After the regional observability concept has been introduced,
Zerrik, Badraoui and El Jai proposed the notion of 
regional boundary observability, which has the same goal 
of regional observability but where the desired sub-region 
is a part of the boundary \cite{zerrik.bnd.200,zerrik.bnd.1999}. 
Although important, all such notions and results were not enough 
to get all possible characterizations of DPS. 
For this reason, in the 21th century the notion of 
regional gradient observability has been introduced
and investigated, with the goal of finding and recover 
the initial gradient vector in a suitable region
\cite{zerrik.grd.2003,zerrik.grd.2000}.
We adopt here this notion of gradient observability, 
which has been subject of a recent increase of interest 
\cite{grad.frac,MR3802082,MR3757160}. This is due to the fact 
that the concept of gradient observability finds applications 
in real-life situations. For instance, consider the problem 
of determining the laminar flux on the boundary of a heated vertical 
plate developed in steady state: see Figure~\ref{plate} 
for the profile of an active plate. In this case, the notion 
of gradient observability is associated to the problem 
of determining the thermal transfer that is generated by the heated plate.
For more on the subject, and for applications of the different observability 
concepts for various kinds of systems, we refer the reader to 
\cite{boutoulout.2010,boutoulout.2013,boutoulout.2015,boutoulout.2014}.
\begin{figure}
\center{\includegraphics[width=\textwidth]{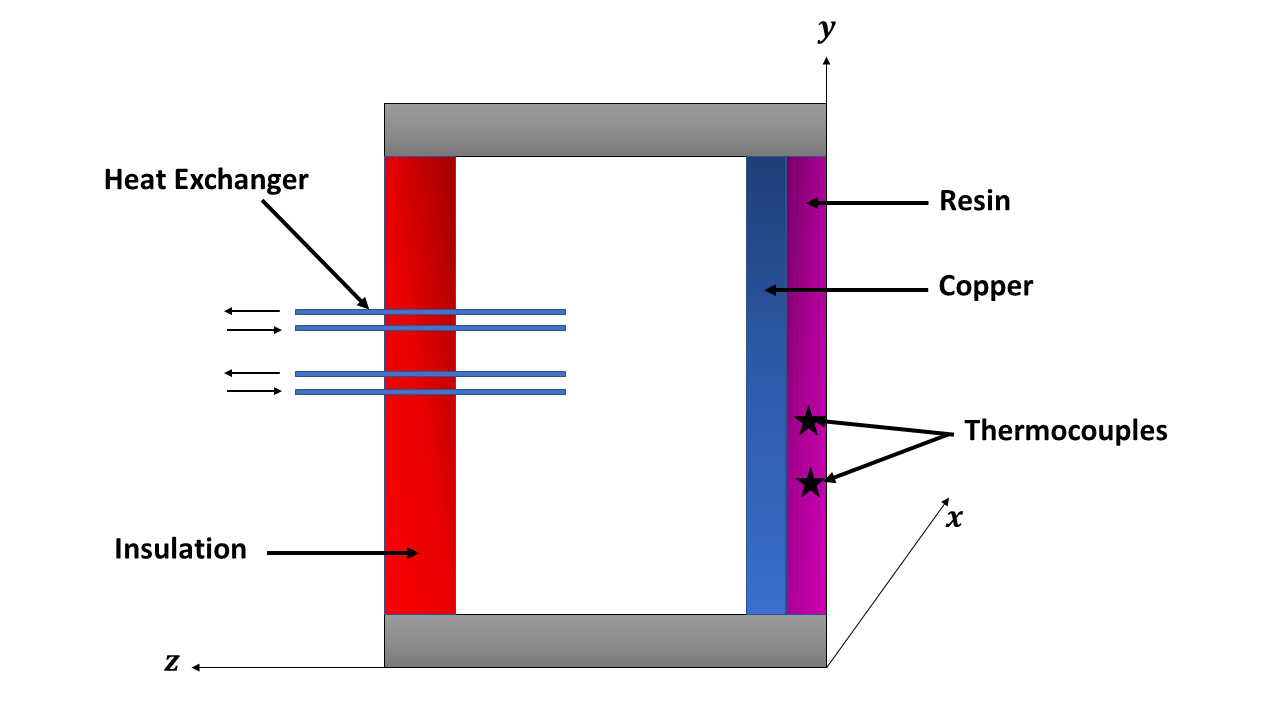}}
\caption{\label{plate} Profile of an active plate.}
\end{figure} 

Fractional calculus is one of the most rapidly spreading 
domains in mathematics nowadays, especially the use of fractional-order systems 
to model real-world phenomena \cite{hand.bal.A,hand.bal.B,hand.petras,torres_roda.2021,torres_rosa.2022}.
It is well known that fractional operators, non-integer order differentiation and 
non-integer order integration operators, have many outstanding properties 
that make them fruitful and suitable for describing and studying 
the characteristics of certain real-world problems. The non-local fractional 
operators, not only consider the local points to calculate the (fractional) derivative 
of some function but also consider the past states, as is the case with left-sided fractional 
operators, or the future states, as happens with the right-sided operators. We also mention that
fractional operators have hereditary properties \cite{hand.taras.A,hand.taras.B}. Moreover, 
the diversity of fractional operators can also be seen as an advantage of fractional calculus 
because having many different types of fractional integrals and derivatives lead to more choices 
in the modeling of real-world phenomena. This explains why fractional calculus has been used
with success and benefit in various different domains. For more details, we refer 
the interested reader to the books \cite{torres_nda.2021,podlubny}.

In the subject of regional observability, we can already find several works dealing with fractional
systems \cite{MR3955081,me.atanaa,MyID:423,MR4066412,me.matcom.2022,zguaid.2022.2,me.sm2a}. However,
investigations of regional gradient observability for time-fractional diffusion processes are scarce. 
We are only aware of \cite{grad.frac}, where Ge, Chen and Kou propose a reconstruction procedure for
Riemann--Liouville fractional time diffusion processes. The main goal of the present paper is the
investigation of regional gradient observability for time-fractional diffusion systems described 
by the Caputo derivative, where the purpose is to find and reconstruct the gradient 
of the initial state of the considered system in a desired subregion of the evolution domain. 
This is in contrast with \cite{grad.frac}, where non-integer order systems are written with the
Riemann--Liouville derivative and where it is mentioned that their approach fails to cover systems
described by the Caputo derivative (cf. Lemma~7 of \cite{grad.frac}). Here we prove 
an alternative lemma that fixes the drawbacks mentioned in \cite{grad.frac}. 
Our contribution consists of giving several characterizations 
for regional exact and approximate gradient observability of the considered linear system. 
We present a method that allows the regional reconstruction of the initial gradient vector 
in the desired subregion. Moreover, we provide some simple numerical simulations that 
back-up our theoretical results.

The organization of our paper is done in the upcoming manner. 
In Section~\ref{sec:2}, the necessary background information 
about regional gradient observability, as well as some of its 
useful properties and characterizations, are given. Section~\ref{sec:3} 
is devoted to illustrate, throughout a counterexample, that 
we can have a system that is not gradient observable but it is 
regionally gradient observable in some suitable region included in the evolution space. 
A full characterization of the notion in hand, via gradient strategic sensors, 
is then given in Section~\ref{sec:4}, while in Section~\ref{sec:5} we develop 
the steps to be followed in order to achieve the regional flux reconstruction. 
In Sections~\ref{sec:6} and \ref{sec:7} we present, respectively, two applications 
of the obtained results and two successful numerical simulations.
Finally, we end with Section~\ref{sec:8} of conclusions and some future
directions of research.


\section{Problem Statement and Regional Gradient Observability}
\label{sec:2}

We now present a general formulation of the considered problem of initial 
gradient reconstruction. We also layout all the needed preliminary results 
and ingredients to make it easy for the reader to follow smoothly throw the manuscript. 

Let $\Omega$ be a connected, open, and bounded set in $\mathbb{R}^n$, $n\geq 1$, 
possessing a Lipschitz-continuous boundary $\partial\Omega$. For any final time 
$T\in\mathbb{R}^*_+$, we designate $Q_T := \Omega\times[0,T]$ and 
$\Sigma_T := \partial\Omega\times[0,T]$. Let us take the dynamic of the 
considered system to be the following operator 
defined in the state space $E=L^{^2}(\Omega)$ as
\begin{equation}
\label{A}
\mathcal{D}(A) = H^2(\Omega)\cap H^1_0(\Omega) 
\ \mbox{ and }  \ 
Ay(x) = \displaystyle\sum_{k,l=1}^{n} 
\partial_{x_k}\left(a_{k,l}(x)\partial_{x_l}y(x)\right), 
\quad \forall x\in\Omega, \ \forall y\in E,
\end{equation}
where $\partial_{x}$ stands for $\dfrac{\partial}{\partial x}$
and the coefficients $a_{i,j} \in C^1(\overline{\Omega})$ 
satisfy the following hypotheses:
\begin{enumerate}

\item[$(\mathcal{H}_1)$] $a_{k,l}(\cdot) = a_{l,k}(\cdot)$;

\item[$(\mathcal{H}_2)$] $\exists \mu\in\mathbb{R}$ such that: 
$$
\displaystyle\sum_{k,l=1}^{n} a_{k,l}(x)\varsigma_k\varsigma_l
\geq \mu \|\varsigma\|^2, \ x\in\Omega,
$$ 
for $\varsigma=(\varsigma_1,\ldots,\varsigma_n)\in\mathbb{R}^n$ 
and where $\|\varsigma\|=\sqrt{\varsigma_1^2 + \cdots + \varsigma_n^2}$. 
\end{enumerate}

Hypotheses $(\mathcal{H}_1)$ and $(\mathcal{H}_2)$ mean, respectively, 
that $A$ is symmetric and $-A$ is uniformly elliptic. In this case, 
it is well known that $-A$ has a set of eigenvalues such that (see \cite{Yam}):
$\left(\lambda_i\right)_{i\geq 1}$, 
$$
0<\lambda_1<\lambda_2
< \cdots < \lambda_i
<\cdots \rightarrow +\infty
$$  
Each eigenvalue $\lambda_i$ 
corresponds with $r_i$ eigenfunctions 
$\left\{\varphi_{i,j}\right\}_{1\leq j \leq r_i}$, 
where $r_i\in\mathbb{N}^*$ is the multiplicity 
of $\lambda_i$, such that $A\varphi_{i,j} = \lambda_i\varphi_{i,j}$ 
and $\varphi_{i,j} \in H^2(\Omega)\cap H^1_0(\Omega)$, 
$\forall i\in\mathbb{N}^*$ and $1\leq j\leq r_i$. Furthermore, the set 
$\left\{\varphi_{i,j}\right\}_{\substack{ i\geq 1 \\ 1\leq j\leq r_i}}$ 
constitute an orthonormal basis of $E$.

The operator $A$ is an infinitesimal generator of a $C_0$-semigroup 
$\left\{\mathcal{S}(t)\right\}_{t\geq 0}$ on $E$, written as:
\begin{equation}
\label{S.G}
\mathcal{S}(t)y(x) = \displaystyle\sum_{i=1}^{+\infty}\exp(-\lambda_it)
\sum_{j=1}^{r_i}\langle y,\varphi_{i,j}\rangle\varphi_{i,j}(x), 
\ \forall y\in E.
\end{equation}
Here we study fractional systems possessing the form:
\begin{equation}
\label{sys}
\left\{\begin{array}{llll}
^{^C}\mathcal{D}_{0^+}^{^\alpha}u(x,t) =  Au(x,t),  
& (x,t)\in Q_T, \\ 
u(\xi,t) = 0,  & (\xi,t)\in \Sigma_T, \\
u(x,0) = u_0(x)\in H^1_0(\Omega),  
& x\in\Omega,
\end{array}\right.
\end{equation}
where 
$^{^C}\mathcal{D}_{0^+}^{^\alpha}u(\cdot,t) 
:= \displaystyle\int_{0}^{t}\dfrac{(t-e)^{-\alpha}}{\Gamma(1-\alpha)}
\partial_e u(\cdot,e)de$ is the fractional derivative of $u$, in Caputo's 
sense, and $\Gamma(\alpha) = \displaystyle\int_{0}^{+\infty}t^{\alpha-1}e^{-t}dt$ 
is the Euler gamma function. For $u_0\in H^1_0(\Omega)$, both its value and its 
gradient are supposedly unknown. System \eqref{sys} has one and only one 
mild solution in $C(0,T;E)\cap L^2(0,T;\mathcal{D}(A))$ of the following form: 
\begin{equation}
\label{sol}
u(\cdot,t) = \mathcal{S}_\alpha(t)u_0(\cdot) 
:= \displaystyle\sum_{i=1}^{\infty}
E_\alpha(-\lambda_it^\alpha)\sum_{j=1}^{r_i}\langle u_0,
\varphi_{i,j}\rangle\varphi_{i,j}(\cdot), \ 0\leq t\leq T,
\end{equation}
where $E_\alpha(z) = \displaystyle
\sum_{k=0}^{\infty}\dfrac{z^k}{\Gamma(\alpha k+1)}$ 
stands for the one parameter Mittag--Leffler function \cite{Yam1}.

Without any loss of generality, we take $u(t) := u(\cdot,t)$.
The output function, which provides measurements 
and information on the consider system, is:
\begin{equation}
\label{out}
z(t) = Cu(t), \quad 0\leq t\leq T.
\end{equation}
The operator $C: E \rightarrow \mathcal{O}$ satisfies the following 
admissibility condition for $\mathcal{S}_\alpha$ \cite{zguaid.2021}:
\begin{equation}
\label{adm.cnd}
\exists M>0, \quad \displaystyle\int_{0}^{T}\|C
\mathcal{S}_\alpha(t)v\|_{\mathcal{O}}^{^2}dt 
\leq M\|v\|_{E}^{^2}, \  \forall v\in\mathcal{D}(A).
\end{equation}
The Hilbert space $\mathcal{O}$ is called the observation space.

The operator $\mathcal{S}_\alpha(t)$ defined in \eqref{sol}
is a linear bounded operator, see \cite{zguaid.2021}, which describes the evolution 
of the considered time-fractional system in function of its initial state. 
Moreover, if the operator $C$ is bounded, then the admissibility condition 
\eqref{adm.cnd} is always satisfied, which means that any bounded observation 
operator is an admissible observation operator.

Let $\omega\subset\Omega$ be the desired sub-region. 
We introduce the following restriction operators: 
$$
\begin{array}{lllll}
\chi_{_\omega} &:& E&\longrightarrow & L^2(\omega)\\
& & \hfill u & \longmapsto & u_{|_\omega}
\end{array} 
\ \mbox{ and } \ 
\begin{array}{lllll}
\chi_{_\omega}^{n} &:& E^{^n}&\longrightarrow & (L^2(\omega))^{^n}\\
& & \hfill  u& \longmapsto & u_{|_\omega} = \left(\chi_{_\omega}u_1,
\chi_{_\omega}u_2,\ldots,\chi_{_\omega}u_n\right),
\end{array} 
$$
and their adjoint,
$$
\begin{array}{lllll}
\chi_{_\omega}^* &:& L^2(\omega)&\longrightarrow & E\\
& & \hfill v & \longmapsto & 
\left\{
\begin{array}{ll}
v & \text{in} \ \omega\\
0& \text{in} \ \Omega\setminus\omega
\end{array}\right.
\end{array} 
\ \mbox{ and } \ 
\begin{array}{lllll}
(\chi_{_\omega}^{n})^* &:& (L^2(\omega))^{^n}&\longrightarrow & E^{^n}\\
& & \hfill  v& \longmapsto &  \left(\chi_{_\omega}^*v_1,
\chi_{_\omega}^*v_2,\ldots,\chi_{_\omega}^*v_n\right).
\end{array} 
$$
Substituting \eqref{sol} in \eqref{out} gives, 
$$
z(t) = C\mathcal{S}_\alpha(t)u_0
:=(\mathcal{K}_\alpha u_0)(t), \quad t\in[0,T],
$$
where $\mathcal{K}_\alpha : \mathcal{D}\left(\mathcal{K}_\alpha\right)
\subset E \longrightarrow L^2(0,T;\mathcal{O})$ is the observability 
operator, which has an important contribution in defining and characterizing 
both regional and regional gradient observability. 
In \cite{regional.analysis}, the admissibility hypothesis on $C$ 
makes it possible to express the adjoint of $\mathcal{K}_\alpha$ as:
\begin{equation}
\label{K_alp}
\begin{array}{lllll}
\mathcal{K}_\alpha^*&:& \mathcal{D}\left(\mathcal{K}_\alpha^*\right)
\subset L^2(0,T;\mathcal{O}) &\longrightarrow &E,\\
& &\hfill z& \longmapsto
& \displaystyle\int_{0}^{T}\mathcal{S}_\alpha^*(e)C^*z(e)de. 
\end{array}
\end{equation}
Let $\nabla : \mathcal{D}(\nabla)\subset E \rightarrow E^n$ 
be the gradient operator, $\nabla v = \left(\partial_{x_1} v,
\partial_{x_2} v,\ldots,\partial_{x_n} v\right)$, 
for all $ v$ in $ \mathcal{D}(\nabla) = H_0^1(\Omega)$. 
As shown in \cite{Kurula.2012}, the adjoint of $\nabla$ 
is minus the divergence, that is, $\forall V\in\mathcal{D}(\nabla^*)$,
$$
\nabla^*V 
= \left\{
\begin{array}{ll}
-\operatorname{div}(V) 
:= -\displaystyle\sum_{i=1}^{n}\partial_{x_i} V_i 
& \text{in} \quad \Omega,\\
\qquad 0& \text{on} \quad \partial\Omega.
\end{array}\right.
$$
The initial state can be decomposed as follows: 
$$
u_0=\left\{
\begin{array}{ll}
u_0^1& \text{in} \ \omega,\\
\tilde{u}_0 & \text{in} \ \Omega\setminus\omega.
\end{array}\right.
$$
The purpose of regional gradient observability is to reconstruct 
the gradient vector $\nabla u_0^1$ in $\omega$.

We recall that system \eqref{sys} augmented with \eqref{out} 
is called exactly (respectively, approximately) $\omega$-observable 
if, and only if, $Im\left(\chi_{_\omega}\mathcal{K}_\alpha^*\right) 
= L^2(\omega)$ (respectively, $Ker\left(\mathcal{K}_\alpha\chi_{_\omega}^*\right) 
= \left\{0\right\}$). Based on the discussion 
in \cite{zerrik.grd.2003}, we denote 
$H_\alpha := \chi_{_\omega}^n\nabla\mathcal{K}_\alpha^*$ 
and we enunciate the following definitions.

\begin{definition}
\label{def:d1}
System \eqref{sys}, augmented with \eqref{out}, 
is exactly $G$-observable in $\omega$ 
($G$ stands for Gradient) if 
\begin{equation}
\label{exact.G}
Im\left(H_\alpha\right) = \left(L^2(\omega)\right)^n.
\end{equation}
\end{definition}

\begin{definition}
\label{def:d2}
System \eqref{sys}, augmented with \eqref{out}, 
is approximately $G$-observable in $\omega$ if 
\begin{equation}
\label{app.G}
\overline{Im\left(H_\alpha\right)} 
= \left(L^2(\omega)\right)^n.
\end{equation}
\end{definition}

\begin{remark} 
Definitions~\ref{def:d1} and \ref{def:d2} for fractional systems 
coincide with the standard notions for the classical systems 
in the particular case $\alpha=1$ \cite{zerrik.grd.2003}.
\end{remark}

We now present some useful results and properties. 
Our first result gives a characterization 
of the approximate regional gradient observability.

\begin{proposition}
\label{prp1}
The upcoming assertions are equivalent:
\begin{enumerate}
\item[1-] System \eqref{sys} is approximately $G$-observable in $\omega$.
\item[2-] $Ker\left(H_\alpha^*\right)$ = $\left\{0\right\}$.
\item[3-] $H_\alpha H_\alpha^*$ is positive definite.
\item[4-] $\left(\langle\left(\chi_{_\omega}^n\right)^*y, 
\nabla\mathcal{K}_\alpha^*z \rangle_{_{\left(E\right)^n}} = 0, 
\ \forall z\in L^2(0,T;\mathcal{O})\right)$ 
$\implies$ $y=0_{_{\left(L^2(\omega)\right)^n}}$. 
\end{enumerate}
\end{proposition}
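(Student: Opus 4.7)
The plan is to prove the chain of equivalences $(1)\Leftrightarrow(2)\Leftrightarrow(3)\Leftrightarrow(4)$ using standard Hilbert-space duality, once the operator $H_\alpha=\chi_{_\omega}^n\nabla\mathcal{K}_\alpha^*$ has been identified as a bounded linear map between Hilbert spaces so that the usual orthogonal-decomposition theorem and adjoint calculus apply. I first note that $\mathcal{K}_\alpha^*$ is bounded on $L^2(0,T;\mathcal{O})$ thanks to the admissibility condition \eqref{adm.cnd}, $\nabla$ has range in $E^n$ on $H_0^1(\Omega)$, and the restriction $\chi_{_\omega}^n$ is bounded with adjoint $(\chi_{_\omega}^n)^*$ as defined above, so that $H_\alpha:L^2(0,T;\mathcal{O})\to(L^2(\omega))^n$ is well-defined and bounded, with $H_\alpha^{*}=\mathcal{K}_\alpha\nabla^{*}(\chi_{_\omega}^n)^{*}$.

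For $(1)\Leftrightarrow(2)$, I would invoke the standard identity $\overline{Im(H_\alpha)}=Ker(H_\alpha^{*})^{\perp}$, valid in any Hilbert space. Definition~\ref{def:d2} asks $\overline{Im(H_\alpha)}=(L^2(\omega))^n$, which is equivalent to $Ker(H_\alpha^{*})^{\perp}=(L^2(\omega))^n$, i.e., $Ker(H_\alpha^{*})=\{0\}$. This is essentially a one-line argument.

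For $(2)\Leftrightarrow(3)$, I would use that $H_\alpha H_\alpha^{*}$ is a self-adjoint positive operator on $(L^2(\omega))^n$ with $\langle H_\alpha H_\alpha^{*}y,y\rangle=\|H_\alpha^{*}y\|^{2}$ for every $y$. Hence $H_\alpha H_\alpha^{*}$ is positive definite iff $\|H_\alpha^{*}y\|>0$ whenever $y\neq 0$, iff $Ker(H_\alpha^{*})=\{0\}$.

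For $(2)\Leftrightarrow(4)$, I would expand the adjoint identity: for every $y\in(L^2(\omega))^n$ and $z\in L^2(0,T;\mathcal{O})$,
\begin{equation*}
\langle H_\alpha^{*}y,z\rangle_{L^2(0,T;\mathcal{O})}
=\langle y,\chi_{_\omega}^n\nabla\mathcal{K}_\alpha^{*}z\rangle_{(L^2(\omega))^n}
=\langle(\chi_{_\omega}^n)^{*}y,\nabla\mathcal{K}_\alpha^{*}z\rangle_{E^n}.
\end{equation*}
Thus $H_\alpha^{*}y=0$ is equivalent to the pairing in $(4)$ vanishing for all $z$, so $(2)$ and $(4)$ say exactly the same thing. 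The mildly delicate point is making the adjoint manipulation rigorous on $\mathcal{D}(\nabla^{*})$: I would remark that, because $u_0\in H_0^1(\Omega)$, the relevant range of $\mathcal{K}_\alpha^{*}$ is contained in a subspace where $\nabla$ acts, and that $(\chi_{_\omega}^n)^{*}y$ lies in $\mathcal{D}(\nabla^{*})$ (being zero outside $\omega$ in a weak sense fits the framework given for $\nabla^{*}$ in the preamble), so the inner-product identity above is justified. This is the step I expect to need the most care; the remaining arguments are routine orthogonality and positivity manipulations.
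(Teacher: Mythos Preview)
Your proposal is correct and follows essentially the same approach as the paper: the equivalence $(1)\Leftrightarrow(2)$ via $\overline{Im(H_\alpha)}=Ker(H_\alpha^{*})^{\perp}$, the link between $(2)$ and $(3)$ through the identity $\langle H_\alpha H_\alpha^{*}y,y\rangle=\|H_\alpha^{*}y\|^{2}$, and the identification of $(4)$ with $(2)$ via the adjoint expansion $\langle H_\alpha^{*}y,z\rangle=\langle(\chi_{_\omega}^n)^{*}y,\nabla\mathcal{K}_\alpha^{*}z\rangle$. The only cosmetic difference is that the paper closes the loop by a cycle $2)\Rightarrow 3)\Rightarrow 4)\Rightarrow 2)$ (choosing $z=H_\alpha^{*}y$ to pass from $(3)$ to $(4)$) whereas you prove the pairwise equivalences $(2)\Leftrightarrow(3)$ and $(2)\Leftrightarrow(4)$ directly; the underlying identities are identical.
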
 

\begin{proof}
The result follows by proving that $1) \iff 2)$, 
$2) \implies 3)$, $3) \implies 4)$ and $4)\implies 2)$.
\begin{description}
\item[$1) \iff 2)$] This is a direct consequence from the fact that
$Ker(H_\alpha^*) = (Im(H_\alpha))^{^\perp}$.

\item[$2) \implies 3)$] Let $y$ be in $\left(L^2(\omega)\right)^n$. Then,
$$
\langle H_\alpha H_\alpha^*y,y\rangle_{_{\left(L^2(\omega)\right)^n}} 
= \langle H_\alpha^*y,H_\alpha^*y\rangle_{_{L^2(0,T;\mathcal{O})}} 
= \|H_\alpha^*y\|_{_{L^2(0,T;\mathcal{O})}}^2 \geq 0.
$$
Moreover, we get that,
$$
\langle H_\alpha H_\alpha^*y,y\rangle_{_{\left(L^2(\omega)\right)^n}}=0 
\ \implies \ H^*_\alpha y=0,
$$
and, using $2)$, we have:
$$
\langle H_\alpha H_\alpha^*y,y\rangle_{_{\left(L^2(\omega)\right)^n}}
=0 \ \implies \  y=0.
$$
Thus, $H_\alpha H_\alpha^*$ is positive definite. 

\item[$3) \implies 4)$] Let us consider $y\in \left(L^2(\omega)\right)^n$ 
such that $\langle\left(\chi_{_\omega}^n\right)^*y, 
\nabla\mathcal{K}_\alpha^*z \rangle_{_{\left(E\right)^n}} 
= 0$, for all $z\in L^2(0,T;\mathcal{O})$.
Thus, by choosing $z=H_\alpha^*y$, we obtain that
$$
\langle\left(\chi_{_\omega}^n\right)^*y, 
\nabla\mathcal{K}_\alpha^*H_\alpha^*y 
\rangle_{_{\left(E\right)^n}} = \langle H_\alpha^*y, 
H_\alpha^*y \rangle_{_{L^2(0,T;\mathcal{O})}} 
= \langle H_\alpha H_\alpha^*y, 
y \rangle_{_{\left(L^2(\omega)\right)^n}}=0.
$$
Hence, $3)$ implies that $y=0_{_{\left(L^2(\omega)\right)^n}}$. 

\item[$4)\implies 2)$] Let $y\in Ker\left(H_\alpha^*\right)$. 
We have $H_\alpha^*y=0$, which means that 
$\langle H_\alpha^* y,z\rangle_{_{L^2(0,T;\mathcal{O})}} =0$
for all $z\in L^2(0,T;\mathcal{O})$. Hence, 
$\langle \left(\chi_{_\omega}^n\right)^*y,
\nabla\mathcal{K}_\alpha^*z\rangle_{_{\left(E\right)^n}} =0$ 
for all $z\in L^2(0,T;\mathcal{O})$.
Thus, $4)$ implies that $y=0_{_{\left(L^2(\omega)\right)^n}}$ 
and we conclude that $Ker\left(H_\alpha^*\right) = \left\{0\right\}$.
\end{description}
The proof is complete.
\end{proof}

Before proving our second result, we recall the following lemma.

\begin{lemma}[See \cite{curtain.1978}]
\label{lemma}
Let $F$, $G$ and $E$ be three reflexive Banach spaces. 
Let us consider $v\in\mathcal{L}(F,E)$ and $y\in \mathcal{L}(G,E)$. 
The upcoming assertions are equivalent:
\begin{enumerate}
\item $Im(v) \subset Im(y)$;
\item $\exists$ $c>0$ such that:
$$
\|v^*x^*\|_{F^{^*}} \leq c \|y^*x^*\|_{G^{^*}}, \ \forall x^*\in E^{^*}.
$$
\end{enumerate}
\end{lemma}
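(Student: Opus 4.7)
The plan is to establish the two implications separately. One direction uses the closed graph theorem to factor $v$ through $y$ and then transposes, while the other uses Hahn--Banach to extend linear functionals and then exploits reflexivity to recover an operator $F\to G$.

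For $(1)\Rightarrow (2)$, I would construct $\tilde{T}$ such that $v=y\tilde{T}$ and then dualise. Since $Im(y)$ may fail to be closed in $E$, the open mapping theorem is not directly applicable to $y$ itself; the standard remedy is to work with the induced injection $\tilde{y}: G/Ker(y)\to E$, whose image is $Im(y)$, and to set $\tilde{T}: F\to G/Ker(y)$ by $\tilde{T}f:=\tilde{y}^{-1}(vf)$. This is well defined because $Im(v)\subset Im(y)$. The closed graph theorem then yields $\tilde{T}\in\mathcal{L}(F,G/Ker(y))$: if $f_n\to f$ in $F$ and $\tilde{T}f_n\to \eta$, continuity of $v$ and of $\tilde{y}$ forces $\tilde{y}\eta = vf$, hence $\eta=\tilde{T}f$. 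Transposing the identity $v=\tilde{y}\tilde{T}$ gives $v^* = \tilde{T}^*\tilde{y}^*$, and under the canonical identification $(G/Ker(y))^* = Ker(y)^{\perp}\subset G^*$, the operator $\tilde{y}^*$ coincides with $y^*$ acting into $Ker(y)^{\perp}$; assertion (2) then follows with $c=\|\tilde{T}^*\|$.

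For $(2)\Rightarrow (1)$, I would turn the inequality into the existence of a bounded pre-adjoint $T\in\mathcal{L}(F,G)$ with $yT=v$. For each fixed $f\in F$, consider the linear functional $\psi_f: y^*x^*\mapsto \langle x^*, vf\rangle_{E^*,E}$ defined on $Im(y^*)\subset G^*$. The hypothesis $\|v^*x^*\|_{F^*}\leq c\|y^*x^*\|_{G^*}$ makes $\psi_f$ both well defined (because $y^*x^*=0$ forces $v^*x^*=0$, and then $\langle x^*, vf\rangle = \langle v^*x^*,f\rangle=0$) and bounded, of norm at most $c\|f\|_F$. Extending $\psi_f$ by uniform continuity to $\overline{Im(y^*)}$ and then by Hahn--Banach to an element $\tilde{\psi}_f\in G^{**}$, the reflexivity of $G$ identifies $\tilde{\psi}_f$ with some $Tf\in G$; evaluating at an arbitrary $y^*x^*$ yields $\langle x^*, y(Tf)\rangle = \langle x^*, vf\rangle$ for every $x^*\in E^*$, and therefore $y(Tf)=vf$. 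In particular $vf\in Im(y)$ for every $f\in F$, which is (1).

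The main technical obstacle is, in both directions, keeping the constructed operators in the correct spaces. In $(1)\Rightarrow (2)$ the possible non-closedness of $Im(y)$ is circumvented by passing to the quotient $G/Ker(y)$ and invoking the closed graph theorem, which is what buys the boundedness of $\tilde{T}$ for free. In $(2)\Rightarrow (1)$ the Hahn--Banach extension initially delivers only an element of the bidual $G^{**}$, and the reflexivity of $G$---hypothesised in the lemma---is exactly what permits reading it back as an element of $G$, after which the identity $y(Tf)=vf$ falls out of the duality pairing.
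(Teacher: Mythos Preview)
The paper does not prove this lemma; it is quoted from Curtain--Pritchard without argument. Your proof is correct and follows the standard route (closed graph theorem to factor $v=\tilde{y}\tilde{T}$ for one direction, a Hahn--Banach/reflexivity argument for the other).

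One minor remark on $(2)\Rightarrow(1)$: you announce the construction of a bounded \emph{linear} $T\in\mathcal{L}(F,G)$ with $yT=v$, but the Hahn--Banach extension of $\psi_f$ from $\overline{Im(y^*)}$ to all of $G^*$ is not canonical, so the assignment $f\mapsto Tf$ as written need not be linear. This is harmless for the lemma, because your final step only uses $y(Tf)=vf$ for each fixed $f$, which already gives $vf\in Im(y)$ and hence $(1)$. If a genuinely linear factor were required, one would instead stop at $\overline{Im(y^*)}$: by reflexivity of $G$ this equals $(Ker\,y)^{\perp}$, whose dual is canonically $G/Ker(y)$, and $f\mapsto\psi_f$ then defines a bounded linear map $F\to G/Ker(y)$ satisfying $\tilde{y}\circ(\,\cdot\,)=v$.
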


The next proposition characterizes the exact regional gradient observability.

\begin{proposition}
\label{prp.2}
The mentioned statements are equivalent:
\begin{enumerate}
\item[1-] System \eqref{sys} is exactly $G$-observable in $\omega$;

\item[2-] $Ker\left(H_\alpha^*\right)$ 
= $\left\{0\right\}$ and $Im\left(H_\alpha\right)$ is closed;

\item[3-] There exists $c>0$ satisfying: 
$$
\|u\|_{_{\left(L^2(\omega)\right)^n}} 
\leq c\|H_\alpha^* u\|_{_{L^2(0,T;\mathcal{O})}}, 
\quad \forall u\in \left(L^2(\omega)\right)^n.
$$ 
\end{enumerate}
\end{proposition}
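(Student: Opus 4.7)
The plan is to establish the three equivalences in a cyclic (or triangular) fashion, using Lemma~\ref{lemma} (the Douglas-type range inclusion criterion) together with classical facts about closed ranges and orthogonal complements. Concretely, I would prove $1) \iff 2)$ by a closed-range argument, and then $1) \iff 3)$ by a direct application of Lemma~\ref{lemma}.

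For $1) \iff 2)$, the key observation is the standard identity $\overline{Im(H_\alpha)} = Ker(H_\alpha^*)^{\perp}$ in the Hilbert space $(L^2(\omega))^n$. If $1)$ holds, then $Im(H_\alpha) = (L^2(\omega))^n$ is trivially closed, and since it is also dense, its orthogonal complement is $\{0\}$, giving $Ker(H_\alpha^*) = \{0\}$. Conversely, if $Ker(H_\alpha^*) = \{0\}$ then $Im(H_\alpha)$ is dense; if in addition it is closed, we get $Im(H_\alpha) = \overline{Im(H_\alpha)} = (L^2(\omega))^n$, which is $1)$.

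For $1) \iff 3)$, I would apply Lemma~\ref{lemma} with the choices $F = (L^2(\omega))^n$, $G = L^2(0,T;\mathcal{O})$, $E = (L^2(\omega))^n$, and with the operators $v = I$, the identity on $(L^2(\omega))^n$, and $y = H_\alpha$. Since $Im(I) = (L^2(\omega))^n$, the inclusion $Im(v) \subset Im(y)$ is precisely the exact $G$-observability condition \eqref{exact.G}. By Lemma~\ref{lemma}, this inclusion is equivalent to the existence of a constant $c>0$ such that
$$
\|I^* u\|_{(L^2(\omega))^n} \leq c\,\|H_\alpha^* u\|_{L^2(0,T;\mathcal{O})}, \quad \forall u \in (L^2(\omega))^n,
$$
which, since $I^* = I$, is exactly condition $3)$.

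The arguments are essentially routine operator theory once the right ambient spaces are fixed, so I do not expect a substantial obstacle. The main point requiring care is the correct identification of adjoints and the verification that $H_\alpha = \chi_\omega^n \nabla \mathcal{K}_\alpha^*$ is a well-defined bounded linear operator between the relevant Hilbert spaces, so that Lemma~\ref{lemma} genuinely applies; this is guaranteed by the admissibility hypothesis \eqref{adm.cnd} and the fact that $\nabla$ and $\chi_\omega^n$ act between reflexive (in fact Hilbert) spaces. Note also that one could alternatively give a direct proof of $2) \iff 3)$ via the Banach closed range theorem, bypassing Lemma~\ref{lemma}, but invoking Lemma~\ref{lemma} keeps the argument self-contained within the framework already set up in the paper.
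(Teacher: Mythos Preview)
Your proposal is correct and matches the paper's own proof essentially step for step: the paper also proves $1)\Leftrightarrow 2)$ via the density/closed-range argument (phrased through approximate $G$-observability, which by Proposition~\ref{prp1} is exactly your identity $\overline{Im(H_\alpha)} = Ker(H_\alpha^*)^\perp$), and then proves $1)\Leftrightarrow 3)$ by applying Lemma~\ref{lemma} with the identical choices $E=F=(L^2(\omega))^n$, $G=L^2(0,T;\mathcal{O})$, $v=Id$, $y=H_\alpha$.
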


\begin{proof} 
We show that $1)\implies 2)$, $2) \implies 1)$, and $1) \iff 3)$.
\begin{description}
\item[$1)\Rightarrow 2)$] Since system \eqref{sys} is exactly 
$G$-observable in $\omega$, then it is also approximately 
$G$-observable in $\omega$. Hence, 
$Ker\left(H_\alpha^*\right) = \left\{0\right\}$ 
and $Im(H_\alpha) = \left(L^2(\omega)\right)^n 
= \overline{Im(H_\alpha)}$. Thus,  
$Ker\left(H_\alpha^*\right) = \left\{0\right\}$ 
and $Im(H_\alpha) $ is closed.

\item[$2) \Rightarrow 1)$] The equality 
$Ker\left(H_\alpha^*\right) = \left\{0\right\}$ gives that 
\eqref{sys} is approximately $G$-observable in $\omega$. This, 
together with the fact that $Im(H_\alpha)$ is closed, imply 
that $Im(H_\alpha) = \overline{Im(H_\alpha)} 
= \left(L^2(\omega)\right)^n$. Thus, system \eqref{sys} 
is exactly $G$-observable in $\omega$.
 
\item[$1) \Leftrightarrow 3)$] System \eqref{sys} is exactly 
$G$-observable in $\omega \Leftrightarrow Im\left(H_\alpha\right) 
= \left(L^2(\omega)\right)^n$.
We already know that $Im\left(H_\alpha\right) \subset \left(L^2(\omega)\right)^n$, 
hence all that remains is to show that 
$\left(L^2(\omega)\right)^n \subset Im\left(H_\alpha\right)$. 
This last inclusion is a direct application of Lemma~\ref{lemma} 
with $E=F=\left(L^2(\omega)\right)^n$, 
$G=L^2(0,T;\mathcal{O})$, 
$v=Id_{_{\left(L^2(\omega)\right)^n}}$, 
and $y=H_\alpha.$
\end{description}
The proof is complete.
 \end{proof}

\begin{remark}
Our Propositions~\ref{prp1} and \ref{prp.2}
generalize the main results of \cite{zerrik.grd.2003}, 
which are only valid for the classical integer-order case $\alpha=1$.
\end{remark}


\section{A Counterexample}
\label{sec:3}

To show the importance of regional gradient observability, 
we now give an example of a system that is not approximately 
gradient observable, but it is approximately 
$G$-observable in $\omega$.

Let us set $\Omega = ]0,1[\times]0,1[$ and let us work 
with the time-fractional system given by
\begin{equation}
\label{sys.exp}
\left\{\begin{array}{llll}
^{^C}\mathcal{D}_{0^+}^{^{0.5}}u(y_1,y_2,t) 
=  \partial_{y_1}^2u(y_1,y_2,t) + \partial_{y_2}^2u(y_1,y_2,t),  
& (y_1,y_2,t)\in Q_2, \\ 
u(\nu_1,\nu_2,t) = 0,  & (\nu_1,\nu_2,t)\in \Sigma_2, \\
u(y_1,y_2,0) = u_0(y_1,y_2),  & (y_1,y_2)\in\Omega,
\end{array}\right.
\end{equation}
together with the output
\begin{equation}
\label{out.exp}
z(t) = Cu(t) 
= \displaystyle\iint_{_D}u(y_1,y_2,t)f(y_1,y_2)dy_1dy_2,
\end{equation}
where $f(y_1,y_2) = \sin(2\pi y_2)$ and 
$D = \left\{\frac{1}{2}\right\}\times]0,1[$.

We know that the eigenvalues and eigenfunctions of 
$-A = -\partial_{y_1}^2 - \partial_{y_2}^2$ are written as follows:
$$
\lambda_{i,j} = (i^2 + j^2)\pi^2,
$$
and 
$$
\varphi_{i,j}(y_1,y_2) = 2\sin(i\pi y_1)\sin(j\pi y_2).
$$ 
Moreover, from \eqref{sol} and \eqref{out.exp}, one can write that:
\begin{equation}
\label{K_alp_count}
\mathcal{K}_{_\alpha}(t)u_0 = C \mathcal{S}_{_\alpha}(t)u_0 
= \displaystyle\sum_{i,j=1}^{+\infty}E_{0.5}(-\lambda_{i,j}t^{0.5})
\langle u_0,\varphi_{i,j}\rangle 
\displaystyle\iint_{_D}\varphi_{i,j}(y_1,y_2)f(y_1,y_2)dy_1dy_2.
\end{equation}
Let $h(y_1,y_2) = \dfrac{1}{4\pi}\left(\cos(y_1\pi)\sin(4y_2\pi), 
\frac{1}{4}\sin(y_1\pi)\cos(4y_2\pi)\right)$ be an element of $E^2$.

\begin{proposition}
The gradient $h$ is not approximately $G$-observable in $\Omega$ 
but it is approximately $G$-observable in 
$\omega = ]0,1[\times]\frac{1}{8},\frac{5}{8}[$.
\end{proposition}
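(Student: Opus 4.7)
My plan is to apply Proposition~\ref{prp1} pointwise: an element $h\in(L^2(\omega))^n$ is regarded as approximately $G$-observable in $\omega$ precisely when $h\notin Ker(H_\alpha^*)$, i.e., $H_\alpha^* h\neq 0$ in $L^2(0,T;\mathcal{O})$. This reduces both halves of the proposition to computing $H_\alpha^* h$ on the respective domain.

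First I would compute $H_\alpha^* h$ by duality, avoiding the need to interpret $\nabla^*(\chi_\omega^n)^* h$ directly: for any $z\in L^2(0,T;\mathcal{O})$,
\[
\langle z,H_\alpha^* h\rangle_{L^2(0,T;\mathcal{O})}=\langle H_\alpha z,h\rangle_{(L^2(\omega))^n}=\int_\omega \nabla\mathcal{K}_\alpha^* z\cdot h\,dx.
\]
Substituting the spectral expansion derived from \eqref{K_alp} and \eqref{K_alp_count}, together with the sensor identity $\iint_D\varphi_{i,j}f=\sin(i\pi/2)\,\delta_{j,2}$ (which retains only the modes $j=2$ with odd $i$), yields
\[
(H_\alpha^* h)(t)=\sum_{i\text{ odd}}\sin(i\pi/2)\,E_{0.5}\!\left(-\lambda_{i,2}t^{0.5}\right)\,\beta_i(\omega),\qquad \beta_i(\omega):=\int_\omega h\cdot\nabla\varphi_{i,2}\,dx.
\]
Since $\lambda_{i,2}=(i^2+4)\pi^2$ are pairwise distinct, the Mittag--Leffler functions $\{E_{0.5}(-\lambda_{i,2}t^{0.5})\}_{i\text{ odd}}$ are linearly independent in $L^2(0,T)$, so $H_\alpha^* h=0$ iff $\beta_i(\omega)=0$ for every odd $i$.

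For the first assertion ($\omega=\Omega$), each $\beta_i(\Omega)$ splits by Fubini into products of a $y_1$- and a $y_2$-integral; the two $y_2$-integrals are $\int_0^1\sin(4\pi y_2)\sin(2\pi y_2)\,dy_2$ and $\int_0^1\cos(4\pi y_2)\cos(2\pi y_2)\,dy_2$, both zero by orthogonality on $[0,1]$. Hence $H_\alpha^* h\equiv 0$ on $\Omega$. For the second assertion, with $\omega=\,]0,1[\,\times\,]\frac{1}{8},\frac{5}{8}[$, it suffices to exhibit a single odd $i$ with $\beta_i(\omega)\neq 0$; the natural choice is $i=1$, matching the $y_1$-mode in $h$. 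The $y_1$-integrals reduce to $\int_0^1\cos^2(\pi y_1)\,dy_1=\int_0^1\sin^2(\pi y_1)\,dy_1=\frac{1}{2}$, while a product-to-sum expansion together with the tabulation of $\sin(k\pi/4)$ for $k\in\{1,3,5,15\}$ shows both short-interval $y_2$-integrals are nonzero. Adding the two resulting contributions gives $\beta_1(\omega)\neq 0$, hence $H_\alpha^* h\neq 0$.

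The main subtlety is conceptual rather than computational: the proposition asserts observability of a specific $h$, whereas Definitions~\ref{def:d1}--\ref{def:d2} and Proposition~\ref{prp1} formulate observability at the system level. The pointwise reading "$h\notin Ker(H_\alpha^*)$" is forced by the fact that the stronger reading "$h\in\overline{Im(H_\alpha)}$" cannot hold here: an isometry argument in $L^2(\omega)$ shows that every element of that closure has first component of the rigid separable form $\sin(2\pi y_2)\,q(y_1)$, yet $h_1=\frac{1}{4\pi}\cos(\pi y_1)\sin(4\pi y_2)$ factors as $\sin(2\pi y_2)\cdot\frac{1}{2\pi}\cos(\pi y_1)\cos(2\pi y_2)$ with a genuine $y_2$-dependence.
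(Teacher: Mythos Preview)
Your argument is correct and follows the same route as the paper: compute $H_\alpha^* h=\mathcal{K}_\alpha\nabla^*(\chi_\omega^n)^*h$ via the spectral expansion, use the sensor identity to isolate the modes $j=2$ with $i$ odd, verify $y_2$-orthogonality on $\Omega$, and exhibit the nonzero $i=1$ contribution on $\omega$. The only cosmetic difference is that the paper first computes $-\operatorname{div}(h)=\tfrac12\sin(\pi y_1)\sin(4\pi y_2)$ and pairs this single separable term with $\varphi_{i,j}$ (so $y_1$-orthogonality already forces $i=1$, making your Mittag--Leffler linear-independence step superfluous), whereas you pair $h$ with $\nabla\varphi_{i,2}$ by duality and obtain two terms---both negative for $i=1$, so the conclusion is the same.
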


\begin{proof}
Firstly, let us show that $h$ is not approximately 
$G$-observable in $\Omega$, i.e., 
$h\in Ker\left(\mathcal{K}_{_\alpha}(t)\nabla^{^*}\right)$
for all $t\in[0,T]$. We have:
$$
\begin{array}{lll}
\mathcal{K}_{_\alpha}(t)\nabla^{^*}h 
&=& \displaystyle\sum_{i,j=1}^{+\infty}
E_{0.5}(-\lambda_{i,j}t^{0.5})\langle \nabla^{^*}h,\varphi_{i,j}\rangle
\displaystyle\iint_{_D}\varphi_{i,j}(y_1,y_2)f(y_1,y_2)dy_1dy_2,\\
& = & \displaystyle\sum_{i,j=1}^{+\infty}
E_{0.5}(-\lambda_{i,j}t^{0.5})\int_{0}^{1}\sin(y_1\pi)
\sin(iy_1\pi)dy_1\int_{0}^{1}\sin(4y_2\pi)\sin(jy_2\pi)dy_2\\
& & \qquad \times \sin(\frac{i\pi}{2})
\displaystyle\int_{0}^{1}\sin(2y_2\pi)\sin(jy_2\pi)dy_2 = 0.
\end{array} 
$$
Hence, $h\in Ker\left(\mathcal{K}_{_\alpha}(t)\nabla^{^*}\right)$.

We now show that $h$ is approximately $G$-observable 
in $\omega$, i.e., $h \notin Ker\left(
\mathcal{K}_{_\alpha}(t)\nabla^{^*}(
\chi_{_\omega}^n)^{^*}\chi_{_\omega}^n\right)$ 
for all $t\in[0,T]$. We have:
\begin{equation*}
\begin{split}
\mathcal{K}_{_\alpha}(t)\nabla^{^*}(\chi_{_\omega}^n)^{^*}\chi_{_\omega}^nh 
&= \displaystyle\sum_{i,j=1}^{+\infty}E_{0.5}(-\lambda_{i,j}t^{0.5})\langle
\nabla^{^*}(\chi_{_\omega}^n)^{^*}\chi_{_\omega}^nh,\varphi_{i,j}\rangle
\displaystyle\iint_{_D}\varphi_{i,j}(y_1,y_2)f(y_1,y_2)dy_1dy_2,\\
&=\displaystyle\sum_{i,j=1}^{+\infty}E_{0.5}(-\lambda_{i,j}t^{0.5})
\int_{0}^{1}\sin(y_1\pi)\sin(iy_1\pi)dy_1\int_{\frac{1}{8}}^{\frac{5}{8}}
\sin(4y_2\pi)\sin(jy_2\pi)dy_2\\
& \qquad \times \sin(\frac{i\pi}{2})
\displaystyle\int_{0}^{1}\sin(2y_2\pi)\sin(jy_2\pi)dy_2,\\
&= E_{0.5}(-\lambda_{1,2}t^{0.5})\displaystyle\int_{0}^{1}\sin(y_1\pi)^2dy_1
\int_{\frac{1}{8}}^{\frac{5}{8}}\sin(4y_2\pi)\sin(2y_2\pi)dy_2
\int_{0}^{1}\sin(2y_2\pi)^2dy_2,\\
&= -\dfrac{\sqrt{2}}{24\pi}E_{0.5}(5\pi^2t^{0.5})\neq 0.
\end{split} 
\end{equation*}
We conclude that $h$ is approximately $G$-observable in $\omega$.
\end{proof}


\section{Gradient Strategic Sensors}
\label{sec:4}

In this section, we give a characterization of strategic gradient sensors 
whenever the considered system is approximately $G$-observable 
in the desired subregion.

\begin{definition}
We call a sensor any element $(D,f)$, where $D$ is the geometrical placement 
of the sensor, which is included in $\Omega$, 
and $f: D\rightarrow \mathbb{R}$ is its distribution. 
 \end{definition}

We introduce here two types of sensors:
\begin{itemize}
\item Zonal sensor, when $D$ has positive Lebesgue measure, $f\in L^2(D)$,
the space $\mathcal{O}$ is $\mathbb{R}$, and the measurements are given by 
$z(t)=\langle f,u(t)\rangle_{_{L^2(D)}} = \displaystyle\int_{D}f(x)u(x,t)dx$;

\item Pointwise sensor, when $D=\left\{b\right\}\in \Omega$, $f \equiv \delta_b$ 
with $\delta_b$ is the Dirac mass centered in $b$, 
the space $\mathcal{O}$ is $\mathbb{R}$, and the output equation 
is given by $z(t) =  u(b,t)$. 
\end{itemize}

\begin{remark}
When we consider a zonal sensor, then the observation operator is bounded; 
if we take a pointwise sensor, then the observation operator is unbounded 
but it is an admissible observation operator.
\end{remark}

For more information about sensors and their characterizations 
see \cite{Jai.Prit,regional.analysis,sensor.2002}. 

Let us reconsider system \eqref{sys}. We take the measurements to be given 
by means of $p$ sensors $\left(D_i,f_i\right)_{1\leq i\leq p}$. The observation 
space is $\mathcal{O}=\mathbb{R}^p$ and the output equation is written as:
\begin{equation}
\label{out.p}
z(t) = \left(z_1(t), \ldots , z_p(t)\right)^t,
\end{equation}
where $z_i(t) = \langle u(t),
f_i\rangle_{_{L^2(D)}}, \ \forall i\in \segN{1}{n}$.
The adjoint of the observation operator $C$ is expressed 
for all $y=(u_1,\ldots,u_p)\in\mathbb{R}^p$ by:
\begin{equation}
\label{C*z}
C^*u  = \displaystyle\sum_{i=1}^{p}\chi_{_{D_i}}f_iu_i, 
\end{equation}
for the case of zonal sensors, and by
\begin{equation}
\label{C*p}
C^*u  = \displaystyle\sum_{i=1}^{p}u_i\delta_{b_i},
\end{equation}
for the case of pointwise sensors.

\begin{definition}
A sequence of sensors (or a sensor) is said to be gradient $\omega$-strategic 
if \eqref{sys}, augmented with \eqref{out.p}, is approximately 
$G$-observable in $\omega$.
\end{definition}

In \cite{grad.frac}, it is given that a lemma (Lemma~7 of \cite{grad.frac})
that fails to be valid when the considered system is written in terms 
of the Caputo derivative, as we do here. Now we present an alternative 
new lemma that allows to deal with the problem.

\begin{remark}
The problem of this article is formulated with Caputo-type 
fractional derivatives only. However, Riemann--Liouville fractional derivatives 
appear naturally due to fractional integration by parts and Green's formulas 
(cf. Lemmas~\ref{lem:FIBP} and \ref{lem:FGF} below).
\end{remark}

\begin{lemma}
\label{lemma.new}
Let $r$ be a function that satisfies
\begin{equation}
\label{sys.RL}
\left\{\begin{array}{llll}
^{^{RL}}\mathcal{D}_{T^-}^{^\alpha}r(y,s) 
=  A^*r(y,s) + C^*z(s), & (y,s)\in Q_T, \ \alpha\in]0,1], \\ 
r(\xi,s) = 0,  & (\xi,s)\in \Sigma_T, \\
\lim\limits_{s\rightarrow T^-}
\mathcal{I}_{_{T^-}}^{^{1-\alpha}}r(y,s) = 0, 
& y\in\Omega,
\end{array}\right.
\end{equation}
where:
$$
^{^{RL}}\mathcal{D}_{T^-}^{^\alpha}r(y,s) 
= \partial_s\displaystyle\int_{s}^{T}
\dfrac{(e-s)^{-\alpha}}{\Gamma(1-\alpha)}r(y,e)de,
$$ 
is the right-sided fractional derivative in the sense of Riemann--Liouville, and,
$$
\mathcal{I}_{_{T^-}}^{^{\alpha}}r(y,s) 
= \dfrac{1}{\Gamma(\alpha)}\displaystyle\int_{s}^{T}(e-s)^{\alpha-1}r(y,e)de,
$$ 
is the right-sided Riemann-Liouville fractional integral. 
Then the following equality holds:
$$
\mathcal{K}_\alpha^*z = \mathcal{I}_{_{T^-}}^{^{1-\alpha}}r(x,0).
$$
\end{lemma}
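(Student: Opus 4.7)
The plan is to run a standard duality argument between the forward Caputo problem and the backward Riemann--Liouville adjoint problem \eqref{sys.RL}, testing one equation against the solution of the other and using the announced fractional integration by parts (Lemma~\ref{lem:FIBP}) together with Green's formula (Lemma~\ref{lem:FGF}). The key observation is that both the identity to prove and the formula \eqref{K_alp} for $\mathcal{K}_\alpha^*$ are linear in $z$ and act on functions of $x$, so it suffices to verify the equality after pairing with an arbitrary test element $v\in E$.

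First I would pick any $v\in E$ (in fact, any $v\in \mathcal{D}(A)$, then extend by density) and set $w(x,t) := \mathcal{S}_\alpha(t)v$, which by \eqref{sol} is the mild solution of the forward Caputo system with initial datum $v$, so that $^{C}\mathcal{D}_{0^+}^{\alpha} w = A w$ on $Q_T$ with $w=0$ on $\Sigma_T$ and $w(\cdot,0)=v$. Next I would take the $E$-inner product of the PDE in \eqref{sys.RL} against $w$, integrate over $[0,T]$, and split the computation into a time part and a space part.

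For the time part, fractional integration by parts (Lemma~\ref{lem:FIBP}) gives
\begin{equation*}
\int_0^T \langle\, ^{RL}\mathcal{D}_{T^-}^{\alpha} r(\cdot,s),\, w(\cdot,s)\rangle_E \, ds
= \int_0^T \langle r(\cdot,s),\, ^{C}\mathcal{D}_{0^+}^{\alpha} w(\cdot,s)\rangle_E \, ds
+ \Bigl[\langle \mathcal{I}_{T^-}^{1-\alpha} r(\cdot,s),\, w(\cdot,s)\rangle_E\Bigr]_0^T .
\end{equation*}
The terminal condition $\lim_{s\to T^-}\mathcal{I}_{T^-}^{1-\alpha} r(\cdot,s)=0$ kills the upper endpoint, leaving the contribution $-\langle \mathcal{I}_{T^-}^{1-\alpha} r(\cdot,0),v\rangle_E$. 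For the space part, Green's formula (Lemma~\ref{lem:FGF}) together with the Dirichlet boundary conditions satisfied by both $r$ and $w$ yields $\langle A^* r,w\rangle_E = \langle r, Aw\rangle_E$ pointwise in $s$. Substituting $^{C}\mathcal{D}_{0^+}^{\alpha}w = Aw$ and using the adjoint equation $^{RL}\mathcal{D}_{T^-}^{\alpha}r = A^* r + C^*z$ to cancel the two $\langle A^* r,w\rangle$ terms, one is left with
\begin{equation*}
\langle \mathcal{I}_{T^-}^{1-\alpha} r(\cdot,0),\, v\rangle_E
= \int_0^T \langle C^* z(s),\, w(\cdot,s)\rangle_E \, ds
= \int_0^T \langle C^* z(s),\, \mathcal{S}_\alpha(s) v\rangle_E \, ds .
\end{equation*}

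Finally, moving $\mathcal{S}_\alpha(s)$ to its adjoint and invoking the explicit form \eqref{K_alp} of $\mathcal{K}_\alpha^*$, the right-hand side equals $\langle \mathcal{K}_\alpha^* z, v\rangle_E$. Since $v$ is arbitrary in a dense subset of $E$, the identity $\mathcal{K}_\alpha^* z = \mathcal{I}_{T^-}^{1-\alpha} r(\cdot,0)$ follows. The genuinely delicate point I expect is the rigorous application of the fractional integration by parts: one needs enough regularity of $r$ near $s=0$ for the boundary term $\mathcal{I}_{T^-}^{1-\alpha}r(\cdot,0)$ to be well defined in $E$, and enough regularity of $w$ near $s=T$ so that the product with $\mathcal{I}_{T^-}^{1-\alpha}r$ actually tends to $0$; this is exactly the step where the analogous Riemann--Liouville argument in \cite{grad.frac} breaks down for Caputo systems, and it is why the terminal condition in \eqref{sys.RL} is imposed in integral form rather than as $r(\cdot,T)=0$.
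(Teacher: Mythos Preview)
Your duality argument is correct and reaches the result, but it is a genuinely different route from the paper's. The paper's proof is purely computational: it writes down the explicit mild solution $r(s)=\int_s^T (e-s)^{\alpha-1}\mathcal{N}_\alpha^*(e-s)C^*z(e)\,de$ of the backward Riemann--Liouville problem and then invokes Proposition~3.3 of \cite{zguaid.2021}, which directly yields $\mathcal{I}_{T^-}^{1-\alpha}r(\cdot,0)=\int_0^T \mathcal{S}_\alpha^*(\tau)C^*z(\tau)\,d\tau$, matching \eqref{K_alp} in one line. Your approach never uses the explicit form of $r$: you test the equation against $w=\mathcal{S}_\alpha(\cdot)v$ and transfer the time derivative via Lemma~\ref{lem:FIBP}. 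This is more self-contained (no external proposition) and closer in spirit to how the identity is actually exploited later in the proof of Theorem~\ref{th.sens}; it also makes the regularity requirements visible, which is precisely the point you flag at the end. The paper's version is shorter but offloads all the analytical work to the cited reference. Two small remarks: the boundary bracket $[\langle\mathcal{I}_{T^-}^{1-\alpha}r,w\rangle]_0^T$ in your display carries the wrong sign relative to \eqref{integ} (rearranging gives $+\langle w(0),\mathcal{I}_{T^-}^{1-\alpha}r(0)\rangle$ on the right, not $-$), though your final identity is correct so the slip is cosmetic; and the spatial step $\langle A^*r,w\rangle_E=\langle r,Aw\rangle_E$ only needs the classical elliptic Green's formula under Dirichlet conditions, not the full space-time Lemma~\ref{lem:FGF}.
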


\begin{proof}
The solution of \eqref{sys.RL} can be written as:
$$
r(s) = \displaystyle\int_{s}^{T}(e-s)^{\alpha-1}\mathcal{N}^*_\alpha(e-s)C^*z(e)de,
$$
where $\mathcal{N}_\alpha$ is a linear and bounded operator defined in terms 
of a probability density function \cite{zguaid.2021}.
From Proposition~3.3 of \cite{zguaid.2021}, we have that:
$$
\mathcal{I}_{_{T^-}}^{^{1-\alpha}}r(x,0) 
= \displaystyle\int_{0}^{T}\mathcal{S}_\alpha^*(\tau)C^*zd\tau.
$$
Hence, from \eqref{K_alp}, we have that:
$$
\mathcal{K}_\alpha^*z 
= \displaystyle\int_{0}^{T}\mathcal{S}_\alpha^*(\tau)C^*z(s)ds 
= \mathcal{I}_{_{T^-}}^{^{1-\alpha}}r(x,0),
$$
and the result is proved.
\end{proof}

The following fractional integration by parts formula 
will be useful in the sequel.

\begin{lemma}[See \cite{frac.intg}]
\label{lem:FIBP}	
Let $v$ be a function in $L^{^p}(0,T;E)$, let $u$ be in $AC(0,T;E)$ 
and $\alpha$ in $]0,1]$. The formula 
\begin{equation}
\label{integ}
\begin{array}{lll}
\displaystyle\int_{0}^{T} \left({^{^C}\mathcal{D}_{0^+}^{^\alpha}}u(t)\right)v(t)dt 
= \displaystyle\int_{0}^{T} u(t)\left({^{^{RL}}\mathcal{D}_{T^-}^{^\alpha}} v(t)\right)dt
+ u(T)\lim\limits_{t\rightarrow T^-}
\mathcal{I}_{_{T^-}}^{^{1-\alpha}}v(t)
- u(0)\mathcal{I}_{_{T^-}}^{^{1-\alpha}}v(0)
\end{array}
\end{equation}
of integration by parts holds.
\end{lemma}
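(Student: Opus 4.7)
The strategy is to use the standard identification ${^{^C}\mathcal{D}_{0^+}^{^\alpha}}u(t) = \mathcal{I}_{0^+}^{1-\alpha}u'(t)$, valid because $u \in AC(0,T;E)$ so that $u'$ exists a.e. and lies in $L^1(0,T;E)$. This reduces the Caputo derivative on the left-hand side of \eqref{integ} to an ordinary derivative under a left-sided Riemann--Liouville integral, after which the fractional integral can be shifted onto $v$ by a Fubini argument, and the residual derivative on $u$ can be absorbed by a classical integration by parts.

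First, I would rewrite the left-hand side as
$$
\int_0^T \left({^{^C}\mathcal{D}_{0^+}^{^\alpha}}u(t)\right) v(t) \, dt = \frac{1}{\Gamma(1-\alpha)} \int_0^T \int_0^t (t-s)^{-\alpha} u'(s) \, v(t) \, ds \, dt .
$$
Next, assuming enough integrability to apply Fubini on the triangle $\{0 \leq s \leq t \leq T\}$, I swap the order of integration to obtain
$$
\frac{1}{\Gamma(1-\alpha)} \int_0^T u'(s) \int_s^T (t-s)^{-\alpha} v(t) \, dt \, ds = \int_0^T u'(s) \left(\mathcal{I}_{_{T^-}}^{^{1-\alpha}} v(s)\right) ds .
$$
This identity is the classical Riemann--Liouville fractional integration by parts for integrals, and it is the pivot that converts the left-sided construction on $u$ into a right-sided one on $v$.

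Third, I apply ordinary integration by parts on the $s$-integral, using that $u \in AC(0,T;E)$ and that $\mathcal{I}_{_{T^-}}^{^{1-\alpha}} v$ is, up to sign, the antiderivative of $^{^{RL}}\mathcal{D}_{T^-}^{^\alpha} v$ in accordance with the convention adopted just after \eqref{sys.RL}. This step produces the two boundary contributions
$$
u(T) \lim\limits_{s \to T^-} \mathcal{I}_{_{T^-}}^{^{1-\alpha}} v(s) - u(0) \, \mathcal{I}_{_{T^-}}^{^{1-\alpha}} v(0)
$$
along with the interior term $\int_0^T u(s) \left({^{^{RL}}\mathcal{D}_{T^-}^{^\alpha}} v(s)\right) ds$, which together match the right-hand side of \eqref{integ}.

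The main obstacle is justifying the analytic manipulations at the stated regularity level. For Fubini, the kernel $(t-s)^{-\alpha}|u'(s)||v(t)|$ must be integrable on the triangle; this is where the hypotheses $u \in AC(0,T;E)$ and $v \in L^p(0,T;E)$ with $p$ appropriately related to $\alpha$ enter, ensuring integrability of $\mathcal{I}_{_{T^-}}^{^{1-\alpha}}v$. Likewise, the existence of the limit $\lim_{s \to T^-} \mathcal{I}_{_{T^-}}^{^{1-\alpha}} v(s)$ needed for the boundary term is nontrivial for merely $L^p$ data. The standard remedy is a density argument: prove \eqref{integ} first for smooth $v$ with compact support in $[0,T)$, where all terms are classical, and then extend to general $v \in L^p(0,T;E)$ by continuity of both sides with respect to the $L^p$-norm of $v$, invoking the $L^p$-boundedness of the right-sided Riemann--Liouville integral $\mathcal{I}_{_{T^-}}^{^{1-\alpha}}$ for the appropriate range of $\alpha$.
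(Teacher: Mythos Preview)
Your argument is the standard and correct derivation of this fractional integration-by-parts identity: write the Caputo derivative as $\mathcal{I}_{0^+}^{1-\alpha}u'$, swap the order of integration via Fubini to move the fractional integral onto $v$, and then integrate by parts classically in $s$. Note, however, that the paper does not supply its own proof of this lemma; it is stated with a citation to \cite{frac.intg}, so there is no in-paper argument to compare against. Your outline matches the proof one finds in that reference and in the broader literature.

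One small caution: with the sign convention the paper adopts just after \eqref{sys.RL}, namely ${}^{RL}\mathcal{D}_{T^-}^{\alpha}v = \partial_s\,\mathcal{I}_{T^-}^{1-\alpha}v$ (no leading minus), the classical integration by parts yields $-\int_0^T u\,{}^{RL}\mathcal{D}_{T^-}^{\alpha}v\,ds$ rather than the $+$ sign appearing in \eqref{integ}. This is a known convention mismatch in this line of papers (many authors build the minus into the definition of the right-sided derivative), not a defect in your reasoning; you already flagged it with your ``up to sign'' remark, and that is the right place to address it explicitly if you write this up in full.
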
 

Our next result (Theorem~\ref{th.sens}),
provides a useful characterization 
of gradient $\omega$-strategic sensors.
To prove it, we make use of \eqref{integ} and also
the following fractional Green's formula.

\begin{lemma}[Fractional Green's formula \cite{zguaid.2021}]	
\label{lem:FGF}
For any $f \in H^{^2}\left(0,T;E\right)$ one has
\begin{equation}
\label{green}
\begin{split}
\displaystyle\int_{0}^{T}
&\int_{\Omega}\left({^{^C}\mathcal{D}_{0^+}^{^\alpha}}r(y,s) 
+ Ar(y,s)\right)f(y,s)dyds\\ 
&=\displaystyle \int_{\Omega}r(y,T)\lim\limits_{s\rightarrow T^-}
\mathcal{I}_{_{T^-}}^{^{1-\alpha}}f(y,s)dy 
- \displaystyle \int_{\Omega}r(y,0)\mathcal{I}_{_{T^-}}^{^{1-\alpha}}f(y,0)dy\\
&\quad + \displaystyle\int_{0}^{T}\int_{\Omega}\left({^{^{RL}}\mathcal{D}_{T^-}^{^\alpha}} 
f(y,s) + A^*f(y,s)\right)r(y,s)dyds \\	
& \quad + \displaystyle\int_{0}^{T}\int_{\partial\Omega}\left(r(\varsigma,s)
\dfrac{\partial f(\varsigma,s)}{\partial\nu_{_{A^*}}} 
- \dfrac{\partial r(\varsigma,s)}{\partial\nu_{_{A}}} 
f(\varsigma,s)\right)d\varsigma ds.
\end{split}
\end{equation}
\end{lemma}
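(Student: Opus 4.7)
The plan is to prove the fractional Green's formula by splitting the left-hand integrand into its time-derivative part and its spatial-operator part, treating each separately with the appropriate one-dimensional tool, and then adding the two contributions. The regularity hypothesis $f\in H^2(0,T;E)$ together with the assumptions on $r$ implicit in writing $^{C}\mathcal{D}_{0^+}^{\alpha}r$ and $Ar$ will justify every exchange of integrals.

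First, I would treat the time-derivative term. For almost every fixed $y\in\Omega$, the functions $s\mapsto r(y,s)$ and $s\mapsto f(y,s)$ lie in the spaces required by Lemma~\ref{lem:FIBP} (the former in $AC(0,T)$ because it is a Caputo-differentiable state, the latter in $L^p$ by the Sobolev embedding). Applying \eqref{integ} pointwise in $y$ with $u=r(y,\cdot)$ and $v=f(y,\cdot)$, then integrating in $y$ over $\Omega$ and exchanging orders of integration via Fubini, I obtain
\begin{equation*}
\int_0^T\!\!\int_\Omega {^{C}\mathcal{D}_{0^+}^{\alpha}}r(y,s)\,f(y,s)\,dy\,ds
= \int_0^T\!\!\int_\Omega r(y,s)\,{^{RL}\mathcal{D}_{T^-}^{\alpha}}f(y,s)\,dy\,ds + \mathcal{B}_0 + \mathcal{B}_T,
\end{equation*}
where $\mathcal{B}_T=\int_\Omega r(y,T)\lim_{s\to T^-}\mathcal{I}_{T^-}^{1-\alpha}f(y,s)\,dy$ and $\mathcal{B}_0=-\int_\Omega r(y,0)\mathcal{I}_{T^-}^{1-\alpha}f(y,0)\,dy$ are exactly the two boundary-in-time terms appearing in \eqref{green}.

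Next, I would treat the spatial operator term. Since $A$ is the divergence-form operator \eqref{A} with $C^{1}$ coefficients and $a_{k,l}=a_{l,k}$, a double application of the classical Green's identity on $\Omega$ yields, for $s$ fixed,
\begin{equation*}
\int_\Omega (Ar)(y,s)\,f(y,s)\,dy = \int_\Omega r(y,s)\,(A^*f)(y,s)\,dy + \int_{\partial\Omega}\!\left(r\,\frac{\partial f}{\partial\nu_{A^*}} - \frac{\partial r}{\partial\nu_A}\,f\right)d\varsigma,
\end{equation*}
with $\partial/\partial\nu_A$ and $\partial/\partial\nu_{A^*}$ being the conormal derivatives associated with $A$ and $A^*$. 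Integrating this identity in $s$ over $[0,T]$ produces the space integral of $rA^*f$ and the boundary integral on $\Sigma_T$ that appear in \eqref{green}. Adding this to the time identity obtained in the previous step, and rearranging so that the two $\mathcal{I}_{T^-}^{1-\alpha}$ boundary terms and the $A^*$ term sit on the right, yields \eqref{green}.

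The main obstacle I anticipate is the justification of the boundary-in-time terms: the limit $\lim_{s\to T^-}\mathcal{I}_{T^-}^{1-\alpha}f(y,s)$ exists only in a suitable pointwise/distributional sense, and one must confirm that the $y$-integration and this limit can be interchanged under the stated regularity. The other delicate point is verifying, via a density argument in $H^2(0,T;E)$ and a regularization of $r$ (e.g., by smooth truncations in time), that Fubini and the fractional integration by parts apply simultaneously. Once these measurability/exchange issues are handled, the rest of the argument is a routine combination of Lemma~\ref{lem:FIBP} and the standard Green's formula for the symmetric uniformly elliptic operator $A$.
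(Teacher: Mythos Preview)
The paper does not give its own proof of this lemma; it is stated as a known result imported from the cited reference \cite{zguaid.2021}. Your proposal is correct and follows exactly the route one expects (and almost certainly the one in the cited source): split the left-hand side into its temporal and spatial contributions, handle the Caputo term via the fractional integration by parts formula \eqref{integ} of Lemma~\ref{lem:FIBP} applied pointwise in $y$ and then integrated over $\Omega$, and handle the $A$-term via the classical second Green's identity for the divergence-form elliptic operator, which produces the conormal boundary terms. Summing the two gives \eqref{green}. The technical caveats you flag (existence of $\lim_{s\to T^-}\mathcal{I}_{T^-}^{1-\alpha}f(y,s)$, Fubini, and density/regularization) are the right ones to mention, and the hypothesis $f\in H^2(0,T;E)$ is precisely what makes them go through.
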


\begin{theorem}
\label{th.sens}
The sequence $(D_i,f_i)_{_{1\leq i\leq p}}$ 
is gradient $\omega$-strategic 
if, and only if,
$$
\displaystyle\sum_{s=1}^{n}M_{j}^sy_j^s 
= 0_{_{\mathbb{R}^{^p}}} \implies y 
= 0_{_{(L^2(\omega))^{^n}}},
$$
where
$$
M_j^s = \left(\varphi_{j,k}^{i,s}\right)_{\substack{1
\leq i \leq p \\ 1 \leq k \leq r_j}},
$$
$$ 
\varphi_{j,k}^{i,s} 
= \left\{
\begin{array}{ll}
\langle \partial_{x_s} \varphi_{j,k}, f_i\rangle_{_{L^2(D_i)}}, 
\quad \text{ for zonal sensors},\\
\partial_{x_s} \varphi_{j,k}(b_i), 
\quad \text{ for pointwise sensors},
\end{array}\right. 
$$
$$
y_j^s = \left(y_{j_{_1}}^s,\ldots,y_{j_{_{r_j}}}^s\right)^T 
\in \mathbb{R}^{^{r_j}},
$$
$$
y_{_{j_{_k}}}^s = \langle \chi_{_\omega}^*y_s,
\varphi_{j,k}\rangle_{_{E}}, \ 1\leq k \leq r_j,
$$
and
$$
y = \left(y_1,\ldots,y_n\right) \in (L^{^2}(\omega))^{^n}.
$$
\end{theorem}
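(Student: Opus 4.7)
My plan is to show both directions simultaneously by reducing the theorem, via Proposition~\ref{prp1}, to an explicit computation of $\mathrm{Ker}(H_\alpha^*)$. By the definition of a gradient $\omega$-strategic sensor sequence, the sensors are gradient $\omega$-strategic if and only if \eqref{sys}--\eqref{out.p} is approximately $G$-observable in $\omega$, which by Proposition~\ref{prp1} (equivalence $1)\Leftrightarrow 2)$) is in turn equivalent to $\mathrm{Ker}(H_\alpha^*)=\{0\}$, where $H_\alpha^*=\mathcal{K}_\alpha\nabla^*(\chi_{_\omega}^n)^*$. So I need to compute $H_\alpha^* y$ for an arbitrary $y=(y_1,\ldots,y_n)\in(L^2(\omega))^n$ and identify when it vanishes identically on $[0,T]$.

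First I would insert the spectral decomposition \eqref{sol} of $\mathcal{S}_\alpha(t)$ into $(H_\alpha^* y)(t) = C\mathcal{S}_\alpha(t)\nabla^*(\chi_{_\omega}^n)^* y$, expanding $\nabla^*(\chi_{_\omega}^n)^*y$ against the orthonormal basis $\{\varphi_{j,k}\}$. Using the adjoint relation between $\nabla$ and $\nabla^*$ (applicable since $\varphi_{j,k}\in H^1_0(\Omega)=\mathcal{D}(\nabla)$) together with Green's formula \eqref{green} to shift the derivative from $(\chi_{_\omega}^n)^* y$ onto $\varphi_{j,k}$ (the boundary contributions vanish because $\varphi_{j,k}|_{\partial\Omega}=0$), I obtain an expression of the form
\begin{equation*}
(H_\alpha^* y)_i(t) \;=\; \sum_{j=1}^{\infty} E_\alpha(-\lambda_j t^\alpha)\, c_j^i,
\qquad c_j^i = \sum_{s=1}^{n}\sum_{k=1}^{r_j} \varphi_{j,k}^{i,s}\, y_{j_k}^s,
\end{equation*}
where $c_j^i$ is precisely the $i$-th component of $\sum_{s=1}^{n} M_j^s y_j^s \in \mathbb{R}^p$. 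In the zonal case this is a direct bounded computation; in the pointwise case $C^*$ is unbounded, and here I would invoke Lemma~\ref{lemma.new} and the fractional integration by parts formula \eqref{integ} to rigorously interpret $\mathcal{K}_\alpha$ and $C\varphi_{j,k}=\varphi_{j,k}(b_i)$ through the Riemann--Liouville adjoint system~\eqref{sys.RL}, exploiting the admissibility condition \eqref{adm.cnd}.

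To conclude, I would invoke the linear independence on $[0,T]$ of the family $\{t\mapsto E_\alpha(-\lambda_j t^\alpha)\}_{j\geq 1}$, which follows from the real-analyticity of each $E_\alpha(-\lambda_j t^\alpha)$ and the fact that the $\lambda_j$ are pairwise distinct (so their asymptotic decay rates differ): any finite or countable linear combination that vanishes on $[0,T]$ must have all coefficients zero. Applied to the expansion above, this yields
\begin{equation*}
H_\alpha^* y = 0 \;\Longleftrightarrow\; \sum_{s=1}^{n} M_j^s y_j^s = 0_{\mathbb{R}^p}\quad\text{for every }j\geq 1,
\end{equation*}
which combined with the equivalence $\mathrm{Ker}(H_\alpha^*)=\{0\}\Leftrightarrow$ approximate $G$-observability in $\omega$ gives the stated characterization.

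I expect the main obstacle to lie in step two, namely the rigorous identification of the coefficients $c_j^i$ with $(\sum_s M_j^s y_j^s)_i$: one must carefully justify interchanging the infinite sum over $j$ with $C$ and, in the pointwise case, handle the unboundedness of $C^*=\sum_i \delta_{b_i}$ by routing the computation through the mild/adjoint formulation provided by Lemma~\ref{lemma.new} together with Green's formula~\eqref{green}. The linear independence of the Mittag--Leffler family, while standard, also requires a brief argument since it replaces the classical independence of the exponentials $e^{-\lambda_j t}$ used in the $\alpha=1$ case of \cite{zerrik.grd.2003}.
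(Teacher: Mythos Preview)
There is a genuine gap in your computation of the coefficients $c_j^i$. When you expand $H_\alpha^* y = C\mathcal{S}_\alpha(t)\nabla^*(\chi_\omega^n)^* y$ directly in the eigenbasis, the adjoint relation gives $\langle\nabla^*(\chi_\omega^n)^* y,\varphi_{j,k}\rangle_E=\sum_s\langle\chi_\omega^* y_s,\partial_{x_s}\varphi_{j,k}\rangle_E$, and then applying $C$ contributes $\langle\varphi_{j,k},f_i\rangle_{L^2(D_i)}$. Hence the coefficient of $E_\alpha(-\lambda_j t^\alpha)$ that your route actually produces is
\[
\sum_{k=1}^{r_j}\langle\varphi_{j,k},f_i\rangle_{L^2(D_i)}\sum_{s=1}^{n}\langle\chi_\omega^* y_s,\partial_{x_s}\varphi_{j,k}\rangle_E,
\]
with the derivative paired against $y_s$ and the \emph{undifferentiated} eigenfunction paired against $f_i$ --- exactly the opposite of what the matrices $M_j^s$ in the theorem require (there $\varphi_{j,k}^{i,s}=\langle\partial_{x_s}\varphi_{j,k},f_i\rangle$ and $y_{j_k}^s=\langle\chi_\omega^* y_s,\varphi_{j,k}\rangle$). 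These two families of coefficients are not the same: already for $A=\partial_x^2$ on $(0,1)$ one has $\langle y,j\pi\cos(j\pi\cdot)\rangle\,\langle\sin(j\pi\cdot),f\rangle\neq\langle y,\sin(j\pi\cdot)\rangle\,\langle j\pi\cos(j\pi\cdot),f\rangle$ for generic $y,f$. So your argument, if completed, proves a different (and not obviously equivalent) rank-type condition, not the one stated.

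The paper circumvents this by starting from equivalence~(4) of Proposition~\ref{prp1} rather than~(2): it keeps $\nabla$ on the $\mathcal{K}_\alpha^*$ side, writes $\mathcal{K}_\alpha^* z=\mathcal{I}_{T^-}^{1-\alpha}r(0)$ via Lemma~\ref{lemma.new}, and then introduces the auxiliary Caputo problem $\phi(t)=\mathcal{S}_\alpha(t)\chi_\omega^* y_s$ and combines \eqref{integ} with the fractional Green formula \eqref{green} to obtain $\langle\chi_\omega^* y_s,\partial_{x_s}\mathcal{I}_{T^-}^{1-\alpha}r(0)\rangle=\int_0^T\langle z(\tau),C\partial_{x_s}\phi(\cdot,\tau)\rangle\,d\tau$. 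The whole purpose of this detour --- used for \emph{both} zonal and pointwise sensors, not merely to tame the unbounded $C$ --- is to place $\partial_{x_s}$ between $\mathcal{S}_\alpha$ and $C$, so that after the spectral expansion of $\phi$ the derivative sits on $\varphi_{j,k}$ \emph{inside} $C$, yielding $\langle\partial_{x_s}\varphi_{j,k},f_i\rangle$ and hence $M_j^s y_j^s$. Your final step on the linear independence of the family $t\mapsto E_\alpha(-\lambda_j t^\alpha)$ (for which the paper invokes Lemma~5 of \cite{grad.frac}) is fine; the defect is upstream, in where the spatial derivative lands.
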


\begin{proof} 
From Proposition~\ref{prp1}, we have that 
$(D_i,f_i)_{_{1\leq i\leq p}}$ is gradient $\omega$-strategic if,
and only if, 
$$
\langle\left(\chi_{_\omega}^n\right)^{^*}y, \nabla\mathcal{K}_\alpha^*z 
\rangle_{_{E^{^n}}} = 0, \ 
\forall z\in L^2(0,T;\mathcal{O}) 
\quad \implies \quad y=0_{_{\left(L^2(\omega)\right)^n}},
$$
which means, by using Lemma~\ref{lemma.new}, that:
\begin{equation}
\label{***}
\displaystyle\sum_{s=1}^{n} \langle \chi_{_\omega}^*y_s,\partial_{x_s}
\mathcal{I}^{1-\alpha}_{_{T^-}}r(0) \rangle_{_{E}} =0 
\quad \implies \quad y=0_{_{\left(L^2(\omega)\right)^n}},
\end{equation}
where $r$ is the solution of \eqref{sys.RL}. Let us now find 
the exact expression of $\langle \chi_{_\omega}^*y_s,
\partial_{x_s}\mathcal{I}^{1-\alpha}_{_{T^-}}r(0) \rangle_{_{E}}$.
Let $s$ be an element in $\segN{1}{n}$. We introduce the system:
\begin{equation}
\label{sys.prf}
\left\{\begin{array}{llll}
^{^C}\mathcal{D}_{0^+}^{^\alpha}\phi(x,\tau) 
=  A\phi(x,\tau),  & (x,\tau)\in Q_T, \ \alpha\in]0,1], \\ 
\phi(\varsigma,\tau) = 0,  & (\varsigma,\tau)\in \Sigma_T, \\
\phi(x,0) = \chi_{_\omega}^*y_s(x),  & x\in\Omega.
\end{array}\right.
\end{equation}
Its unique mild solution is written as:
$$
\phi(\cdot,\tau) = \mathcal{S}_\alpha(\tau)\chi_{_\omega}^*y_s(\cdot) 
= \displaystyle\sum_{j=1}^{\infty}
E_\alpha(-\lambda_j\tau^\alpha)\sum_{k=1}^{r_j}\langle 
\chi_{_\omega}^*y_s,\varphi_{j,k}\rangle_{_{E}}\varphi_{j,k}(\cdot).
$$
Multiplying both sides of \eqref{sys.RL} by 
$\partial_{x_s}\phi$ and integrating 
over $\mathcal{Q}_T =\Omega\times[0,T]$, we get that:
\begin{equation}
\label{prf.1}
\int_{\mathcal{Q}_T}{^{^{RL}}
\mathcal{D}_{T^-}^{^\alpha}}r(x,\tau)\partial_{x_s} \phi(x,\tau)dxd\tau 
= \int_{\mathcal{Q}_T} A^*r(x,\tau) \partial_{x_s} \phi(x,\tau)dxd\tau 
+ \int_{\mathcal{Q}_T}C^*z(\tau)\partial_{x_s} \phi(x,\tau)dxd\tau.
\end{equation}
On the other hand, equation \eqref{integ} gives:
\begin{equation}
\label{prf.2}
\int_{\mathcal{Q}_T}{^{^{RL}}\mathcal{D}_{T^-}^{^\alpha}}
r(x,\tau)\partial_{x_s} \phi(x,\tau)dxd\tau
= \int_{\mathcal{Q}_T}r(x,\tau) A\partial_{x_s} \phi(x,\tau)dxd\tau 
+ \int_{\Omega}\phi(x,0)\partial_{x_s}
\mathcal{I}_{_{T^-}}^{^{1-\alpha}}r(x,0)dx.
\end{equation}
From equations \eqref{green}, \eqref{prf.1}, and \eqref{prf.2}, 
and using the boundary conditions, we obtain that:
\begin{equation}
\label{*}
\begin{split}
\langle \chi_{_\omega}^*y_s,\partial_{x_s}
\mathcal{I}^{1-\alpha}_{_{T^-}}r(0) \rangle_{_{E}}	
&= \int_{\Omega}\phi(x,0)\partial_{x_s}
\mathcal{I}_{_{T^-}}^{^{1-\alpha}}r(x,0)dx,\\
&= \int_{\mathcal{Q}_T} C^*z(\tau)\partial_{x_s} \phi(x,\tau)dxd\tau,\\
&= \int_{0}^{T}\langle z(\tau),C\partial_{x_s} 
\phi(\cdot,\tau)\rangle_{_{\mathbb{R}^{^p}}}d\tau.
\end{split}
\end{equation}
Without loss of generality, we continue the proof for the case of zonal 
sensors (the same can be easily done for pointwise sensors). We have that: 
\begin{equation}
\label{**}
C\partial_{x_s} \phi(\cdot,t) 
= \displaystyle\sum_{j=1}^{+\infty}
\sum_{k=1}^{r_j}E_\alpha(\lambda_jt^\alpha)\langle \chi_{_\omega}^*y_s,
\varphi_{j,k}\rangle_{_{E}}\left(
\begin{array}{c}
\langle \partial_{x_s} \varphi_{j,k},f_1\rangle_{_{L^2(D_1)}}\\
\langle \partial_{x_s} \varphi_{j,k},f_2\rangle_{_{L^2(D_2)}}\\ 
\vdots \\
\langle \partial_{x_s} \varphi_{j,k},f_p\rangle_{_{L^2(D_p)}}
\end{array}\right).
\end{equation}
Using \eqref{***}, \eqref{*} and \eqref{**}, we deduce that 
$(D_i,f_i)_{_{1\leq i\leq p}}$ is gradient 
$\omega$-strategic if, and only if,
\begin{multline*}
\displaystyle\int_{0}^{T}\left\langle z(t) , 
\displaystyle\sum_{s=1}^{n}\sum_{j=1}^{+\infty}\sum_{k=1}^{r_j}
E_\alpha(\lambda_jt^\alpha)  \langle \chi_{_\omega}^*y_s,
\varphi_{j,k}\rangle_{_{E}}\left(
\begin{array}{c}
\langle \partial_{x_s} \varphi_{j,k},f_1\rangle_{_{L^2(D_1)}}\\
\langle \partial_{x_s} \varphi_{j,k},f_2\rangle_{_{L^2(D_2)}}\\ 
\vdots \\
\langle \partial_{x_s} \varphi_{j,k},f_p\rangle_{_{L^2(D_p)}}
\end{array}\right)
\right\rangle_{{\mathbb{R}^p}}dt = 0,\\
\forall z\in L^2(0,T;\mathcal{O}) 
\implies y=0_{_{\left(L^2(\omega)\right)^n}}.
\end{multline*}
From Lemma~5 in \cite{grad.frac}, 
we get that the last expression is equivalent to,
$$
\displaystyle\sum_{s=1}^{n}\sum_{j=1}^{+\infty}
\sum_{k=1}^{r_j}E_\alpha(\lambda_jt^\alpha)  \langle
\chi_{_\omega}^*y_s,\varphi_{j,k}\rangle_{_{E}}\left(
\begin{array}{c}
\langle \partial_{x_s} \varphi_{j,k},f_1\rangle_{_{L^2(D_1)}}\\
\langle \partial_{x_s} \varphi_{j,k},f_2\rangle_{_{L^2(D_2)}}\\ 
\vdots\\
\langle \partial_{x_s} \varphi_{j,k},f_p\rangle_{_{L^2(D_p)}}
\end{array}\right) 
=0, \quad \forall t\in [0,T] 
\implies y=0_{_{\left(L^2(\omega)\right)^n}},
$$
which is also equivalent to,
$$ 
\displaystyle\sum_{j=1}^{+\infty} E_\alpha(\lambda_jt^\alpha)
\sum_{s=1}^{n}M_j^sy_j^s = 0, \quad \forall t\in [0,T] 
\implies y=0_{_{\left(L^2(\omega)\right)^n}}.
$$
Because $E_\alpha(\lambda_jt^\alpha) > 0$ for all $t \in [0,T]$ 
and for all $j \in\mathbb{N^*}$, then we have:
$$ 
\displaystyle\sum_{s=1}^{n}M_j^sy_j^s = 0  
\implies y=0_{_{\left(L^2(\omega)\right)^n}},
$$
and the result is proved.
\end{proof}

The following corollary is an immediate consequence 
of Theorem~\ref{th.sens} in the one-dimensional case, 
i.e., when $n=1$. 

\begin{corollary}
If $n=1$, then $(D_i,f_i)_{_{1\leq i\leq p}}$ is gradient 
$\omega$-strategic if, and only if, 
\begin{itemize}
\item $p \geq \sup\{r_j\}$;
\item $rank\ M_j^1 = r_j$ for all $j\in \mathbb{N}^*$.
\end{itemize} 
\end{corollary}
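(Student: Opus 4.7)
The plan is to specialize Theorem~\ref{th.sens} to the case $n=1$ and then translate the resulting condition on the infinite family of matrices $\{M_j^1\}_{j\geq 1}$ into an equivalent rank/size condition, using that $\{\varphi_{j,k}\}$ is an orthonormal basis of $E$.

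First I would apply Theorem~\ref{th.sens} verbatim with $n=1$: the sensors $(D_i,f_i)_{1\leq i\leq p}$ are gradient $\omega$-strategic if, and only if, the implication $M_j^1 y_j^1 = 0_{\mathbb{R}^p} \ (\forall j\in\mathbb{N}^*) \Rightarrow y = 0_{L^2(\omega)}$ holds. The key decoupling observation is that $y\in L^2(\omega)$ is encoded by the scalars $y_{j_k}^1 = \langle \chi_\omega^* y, \varphi_{j,k}\rangle_E$, and since $\{\varphi_{j,k}\}_{j,k}$ is an orthonormal basis of $E$, one has $y=0$ in $L^2(\omega)$ if, and only if, $y_j^1 = (y_{j_1}^1,\ldots,y_{j_{r_j}}^1)^T = 0$ in $\mathbb{R}^{r_j}$ for every $j\in\mathbb{N}^*$. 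Hence the global implication splits into the family of independent implications
\[
M_j^1 y_j^1 = 0_{\mathbb{R}^p} \;\Longrightarrow\; y_j^1 = 0_{\mathbb{R}^{r_j}}, \qquad \forall j\in\mathbb{N}^*,
\]
i.e., each $M_j^1$ must have trivial kernel as a linear map $\mathbb{R}^{r_j}\to\mathbb{R}^p$.

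Next I would translate injectivity of $M_j^1$ into the two stated conditions. Having trivial kernel is exactly $\operatorname{rank} M_j^1 = r_j$, yielding the second bullet. The first bullet is then forced: a $p\times r_j$ matrix can have rank $r_j$ only when $p\geq r_j$, and since this must hold for every $j$, one obtains $p\geq \sup_{j}\{r_j\}$. Conversely, if $p\geq \sup_j\{r_j\}$ and $\operatorname{rank} M_j^1 = r_j$ for all $j$, then every $M_j^1$ is injective, and combining the split implications above gives $y=0$, so the sensors are gradient $\omega$-strategic.

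There is essentially no hard step here: the whole argument is a routine unpacking of Theorem~\ref{th.sens}. The only mild subtlety to write carefully is the passage from ``$y=0$ in $(L^2(\omega))^n = L^2(\omega)$'' to ``$y_j^1 = 0$ in $\mathbb{R}^{r_j}$ for every $j$'', which relies on the completeness of $\{\varphi_{j,k}\}$ in $E$ together with the fact that $\chi_\omega^*$ is an isometric embedding of $L^2(\omega)$ into $E$, so coefficients $\langle\chi_\omega^* y,\varphi_{j,k}\rangle_E$ characterize $y$. Once this is stated, the equivalence between the sensor condition and the two bullets is immediate.
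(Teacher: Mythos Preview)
The paper itself gives no proof beyond the phrase ``immediate consequence of Theorem~\ref{th.sens}'', so your write-up is in fact more detailed than what the paper offers, and your overall strategy (specialize the theorem to $n=1$, then translate into a rank condition) is exactly what is intended there.

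Your sufficiency direction is fine: if every $M_j^1$ has rank $r_j$, then each $M_j^1$ is injective, so $M_j^1 y_j^1=0$ for all $j$ forces $y_j^1=0$ for all $j$, hence $\chi_\omega^* y=0$ and $y=0$.

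The gap is in the necessity direction, precisely at the ``decoupling'' step. You write that the global implication
\[
\bigl(\forall j:\ M_j^1 y_j^1=0\bigr)\ \Longrightarrow\ y=0
\]
``splits into the family of independent implications $M_j^1 y_j^1=0\Rightarrow y_j^1=0$''. This would be true if the blocks $y_j^1\in\mathbb{R}^{r_j}$ could be prescribed \emph{independently}, but they cannot: they are the Fourier blocks of $\chi_\omega^* y$, and $\chi_\omega^* y$ is constrained to vanish on $\Omega\setminus\omega$. Concretely, to derive a contradiction from a nonzero $v\in\ker M_{j_0}^1$ you would want a $y\in L^2(\omega)$ with $y_{j_0}^1=v$ and $y_j^1=0$ for $j\neq j_0$, i.e.\ $\chi_\omega^* y=\sum_k v_k\varphi_{j_0,k}$; this forces the eigenfunction combination $\sum_k v_k\varphi_{j_0,k}$ to vanish outside $\omega$, which in general it does not. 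So the passage from ``the coefficients characterize $y$'' (true) to ``the implications decouple over $j$'' (what you need) is not justified by the isometric embedding of $L^2(\omega)$ into $E$ alone. If you want to keep the ``only if'' direction, you must either supply an argument producing, from $v\in\ker M_{j_0}^1\setminus\{0\}$, an actual $y\in L^2(\omega)\setminus\{0\}$ with $M_j^1 y_j^1=0$ for every $j$, or add a hypothesis under which the blocks can be decoupled; the paper, by writing only ``immediate consequence'', simply does not address this point.
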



\section{The Regional Gradient Reconstruction Method}
\label{sec:5}

Now we present the steps of an approach that permits 
the recovery of the initial gradient for \eqref{sys} 
in $\omega$. Our approach is an extension of 
the Hilbert uniqueness method (HUM) for fractional systems.

Let 
$$
K = \left\{ y\in \left(E\right)^n \ | \ y = 0 \ \text{ in } \ 
\Omega\setminus \omega \right\}\cap\left\{\nabla h \ |
\ h\in H^1_0(\Omega)\right\}.
$$

\begin{remark}
\label{rk2}
Note that $(\chi_{_\omega}^{n})^*\chi_{_\omega}^{n}f 
= f , \quad \forall f\in K$.
\end{remark}

For every $\tilde{\varphi}_0$ in $K$, we introduce the system:
\begin{equation}
\label{sys.hum}
\left\{\begin{array}{llll}
^{^C}\mathcal{D}_{0^+}^{^\alpha}\varphi(y,s) 
=  A\varphi(y,s),  & (y,s)\in Q_T, \ \alpha\in]0,1], \\ 
\varphi(\varsigma,s) = 0,  & (\varsigma,s)\in \Sigma_T, \\
\varphi(y,0) = \nabla^*\tilde{\varphi}_0(y), & y\in\Omega,
\end{array}\right.
\end{equation} 
which possesses one and only one mild solution in 
$L^2(0,T;\mathcal{D}(A))\cap C(0,T;E)$, written as follows:
\begin{equation}
\label{sol.sys.hum}
\varphi(t) =  \mathcal{S}_\alpha(t)\nabla^*\tilde{\varphi}_0, \ t\in [0,T].
\end{equation}
We associate with $K\times K$ the form:
\begin{equation}
\label{bil}
\begin{array}{lllll}
\langle \cdot, \cdot\rangle_{_K} & : & K\times K 
& \longrightarrow & \mathbb{C}\\
& & (f,h)& \longmapsto & \displaystyle 
\int_{0}^{T}\langle C\mathcal{S}_\alpha(t)\nabla^*f,
C\mathcal{S}_\alpha(t)\nabla^*h\rangle_{_\mathcal{O}}dt,
\end{array}
\end{equation} 
where $\langle \cdot, \cdot \rangle_{_\mathcal{O}}$ 
is the scalar product in $\mathcal{O}$.

\begin{remark}
\label{rq}
The bilinear form $\langle \cdot,\cdot \rangle_{_K}$ satisfies the conjugate 
symmetry and positive properties, i.e., 
$\langle g,f \rangle =\overline{\langle f,g \rangle}$
and $\langle f,f \rangle_{_K} \geq 0$.
\end{remark}

\begin{lemma}
\label{lemma.prod}
If the system \eqref{sys.hum} is approximately G-observable in $\omega$, 
then the bilinear form \eqref{bil} becomes a scalar product on $K$.
\end{lemma}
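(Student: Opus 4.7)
The plan is to establish that $\langle\cdot,\cdot\rangle_K$ satisfies all axioms of an inner product. Sesquilinearity is immediate from the definition, and by Remark~\ref{rq} we already have conjugate symmetry and non-negativity $\langle f,f\rangle_K \geq 0$. Hence the only nontrivial property left is definiteness, i.e., $\langle f,f\rangle_K = 0 \implies f = 0$ for $f \in K$. This is where approximate $G$-observability must enter.

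First, I would translate the vanishing of $\langle f,f\rangle_K$ into the vanishing of an operator applied to $f$. By definition,
\[
\langle f,f\rangle_K = \int_{0}^{T} \|C\mathcal{S}_\alpha(t)\nabla^* f\|_{\mathcal{O}}^{2}\, dt = \|\mathcal{K}_\alpha \nabla^* f\|_{L^2(0,T;\mathcal{O})}^{2},
\]
so $\langle f,f\rangle_K = 0$ is equivalent to $\mathcal{K}_\alpha \nabla^* f = 0$ in $L^2(0,T;\mathcal{O})$.

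Next, I would exploit the structure of $K$. Since $f \in K$ vanishes outside $\omega$, Remark~\ref{rk2} gives $f = (\chi_\omega^n)^* \chi_\omega^n f$, hence
\[
0 = \mathcal{K}_\alpha \nabla^* f = \mathcal{K}_\alpha \nabla^* (\chi_\omega^n)^* (\chi_\omega^n f) = H_\alpha^{*}(\chi_\omega^n f),
\]
using $H_\alpha = \chi_\omega^n \nabla \mathcal{K}_\alpha^{*}$ and hence $H_\alpha^{*} = \mathcal{K}_\alpha \nabla^{*}(\chi_\omega^n)^{*}$. Setting $g := \chi_\omega^n f \in (L^2(\omega))^n$, we have $g \in \mathrm{Ker}(H_\alpha^{*})$. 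By the assumed approximate $G$-observability in $\omega$ and the equivalence $1) \Leftrightarrow 2)$ of Proposition~\ref{prp1}, $\mathrm{Ker}(H_\alpha^{*}) = \{0\}$, so $g = \chi_\omega^n f = 0$. Combined with the fact that $f$ already vanishes on $\Omega\setminus\omega$ (because $f \in K$), this gives $f = 0$, completing definiteness and hence showing that $\langle\cdot,\cdot\rangle_K$ is a scalar product on $K$.

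The main obstacle, modest as it is, lies in correctly identifying the bilinear form with the quadratic functional of $H_\alpha^{*}$ and using Remark~\ref{rk2} to interface the restriction $\chi_\omega^n$ with elements of $K$; once these are lined up, the implication follows directly from Proposition~\ref{prp1}. One should also verify that $\nabla^{*}$ is well-defined on $\tilde\varphi_0 \in K$ and that $\mathcal{S}_\alpha(t)\nabla^{*}\tilde\varphi_0$ lies in the domain of $C$ so that the integral in \eqref{bil} makes sense, but this is guaranteed by the admissibility condition \eqref{adm.cnd} together with the regularity of the mild solution \eqref{sol.sys.hum}.
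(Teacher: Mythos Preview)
Your proof is correct and follows essentially the same route as the paper's: reduce to definiteness via Remark~\ref{rq}, use Remark~\ref{rk2} to insert $(\chi_\omega^n)^*\chi_\omega^n$, and then invoke approximate $G$-observability to conclude $\chi_\omega^n f = 0$ and hence $f = 0$. The only cosmetic difference is that you phrase the last step explicitly through $H_\alpha^{*}$ and Proposition~\ref{prp1}, whereas the paper leaves that identification implicit.
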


\begin{proof}
By Remark~\ref{rq}, we only need to show that 
$\langle \cdot,\cdot\rangle_{_K}$ is definite, 
that is, $\langle f,f\rangle_{_K} = 0 \implies f=0$.\\
Let $f$ be an element of $K$. Hence, 
$$
\langle f,f\rangle_{_K} = \displaystyle \int_{0}^{T}\langle
C\mathcal{S}_\alpha(t)\nabla^*f,C\mathcal{S}_\alpha(t)
\nabla^*f\rangle_{_\mathbb{O}}dt = 0,
$$
which implies that:
$$ 
\langle C\mathcal{S}_\alpha(t)\nabla^*f,
C\mathcal{S}_\alpha(t)\nabla^*f\rangle_{_\mathbb{O}}  = 0.
$$
Using Remark~\ref{rk2}, this means that:
$$
C\mathcal{S}_\alpha(t)\nabla^*f 
= C\mathcal{S}_\alpha(t)\nabla^*(\chi_{_\omega}^{n})^*
\chi_{_\omega}^{n}f= 0,
$$
and, since \eqref{sys.hum} is approximately G-observable 
in $\omega$, we have $\chi_{_\omega}^{n}f = 0$.
We conclude that $f = 0 $ in $ \omega$. It follows that
$f =  0$ from the fact that $f \in K$.
\end{proof}

Let $||\cdot||_{_K}$ be the norm on $K$ associated with 
$\langle \cdot,\cdot \rangle_{_K}$, 
and let us denote again by $K$ its completion by the norm $||\cdot||_{_K}$. 
The space $K$ endowed with $||\cdot||_{_K}$ is now a Hilbert space.

We introduce the following auxiliary system:
\begin{equation}
\label{sys.hum.aux}
\left\{\begin{array}{llll}
^{^{RL}}\mathcal{D}_{T^-}^{^\alpha}\Theta(y,s) 
=  A^*\Theta(y,s) - C^*C\varphi(s),  & (y,s)\in Q_T, \ \alpha\in]0,1], \\ 
\Theta(\varsigma,s) = 0,  & (\varsigma,s)\in \Sigma_T, \\
\lim\limits_{s\rightarrow T^-}\mathcal{I}_{_{T^-}}^{^{1-\alpha}}\Theta(y,s) = 0, 
& y\in\Omega,
\end{array}\right.
\end{equation} 
controlled by the solution of \eqref{sys.hum}. 

\begin{remark}
The condition $z(t) = C\varphi(t)$ implies that system \eqref{sys.hum.aux} 
is the adjoint system of \eqref{sys.hum}.
\end{remark}

We now define an operator that associates to every possible candidate 
of the initial gradient in $\omega$, the projection on $K$ 
$\left(\mbox{via the operator } (\chi_{_\omega}^{n})^*\chi_{_\omega}^{n}\right)$ 
of the term $\nabla\mathcal{I}_{_{T^-}}^{1-\alpha}\Theta(0)$,
\begin{equation}\label{lambda}
\begin{array}{lllll}
\Lambda & : & K & \longrightarrow & K, \\
&  & \tilde{\varphi}_0 & \longmapsto 
& (\chi_{_\omega}^{n})^*\chi_{_\omega}^{n}
\nabla\mathcal{I}_{_{T^-}}^{1-\alpha}\Theta(0).  
\end{array}
\end{equation}
This way, the problem of regional gradient reconstruction 
is reduced to a solvability problem of the equation:
\begin{equation}
\label{rec}
\Lambda \tilde{\varphi}_0 = (\chi_{_\omega}^{n})^*\chi_{_\omega}^{n}
\nabla\mathcal{I}_{_{T^-}}^{1-\alpha}\Theta(0),
\end{equation}
which leads to the next result.

\begin{theorem}
\label{th.main}
If system \eqref{sys} is approximately G-observable in $\omega$, 
then equation \eqref{rec} has a unique solution  
$\tilde{\varphi}_0\in K$, which corresponds 
to the initial gradient $\nabla u_0$ in $\omega$. 
\end{theorem}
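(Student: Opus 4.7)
The proof follows the Hilbert Uniqueness Method (HUM) strategy: establish that the operator $\Lambda$ defined in \eqref{lambda} is an isomorphism on the Hilbert space $(K,\|\cdot\|_K)$, so that \eqref{rec} admits a unique solution, and then identify this solution with the initial gradient $\nabla u_0$ in $\omega$. The central step is the isometric identity
$$\langle \Lambda \tilde{\varphi}_0, \tilde{\varphi}_0 \rangle = \|\tilde{\varphi}_0\|_K^2, \quad \tilde{\varphi}_0 \in K,$$
where the pairing on the left comes from the natural $E^n$ inner product (extended by density to $K$).

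To produce this identity, I would begin by unwinding $\langle \Lambda \tilde{\varphi}_0, \tilde{\varphi}_0 \rangle$ via the definition \eqref{lambda}, using Remark~\ref{rk2} to absorb the restriction operators (since $\tilde{\varphi}_0 \in K$ vanishes outside $\omega$) and the adjoint pairing between $\nabla$ and $\nabla^*$, obtaining $\langle \mathcal{I}_{T^-}^{1-\alpha}\Theta(\cdot,0), \nabla^* \tilde{\varphi}_0 \rangle_E$, which by the initial condition of \eqref{sys.hum} equals $\int_\Omega \varphi(y,0)\, \mathcal{I}_{T^-}^{1-\alpha}\Theta(y,0)\, dy$. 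I would then apply the fractional Green's formula of Lemma~\ref{lem:FGF} to the dual pair \eqref{sys.hum}--\eqref{sys.hum.aux}: the PDE for $\varphi$ together with the symmetry of $A$ eliminates the left-hand side of \eqref{green}, the terminal condition $\lim_{s\to T^-}\mathcal{I}_{T^-}^{1-\alpha}\Theta=0$ kills the $t=T$ boundary term, and the homogeneous Dirichlet conditions kill the spatial boundary terms. What remains couples to the forcing $C^*C\varphi$ in \eqref{sys.hum.aux} and yields $\int_0^T\|C\varphi(s)\|_\mathcal{O}^2\, ds$, which by the definition \eqref{bil} is exactly $\|\tilde{\varphi}_0\|_K^2$.

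With the isometric identity established, $\Lambda$ is continuous, symmetric and coercive, so the Lax--Milgram theorem yields that $\Lambda$ is a bijective bounded linear operator on the Hilbert space $K$, giving existence and uniqueness of $\tilde{\varphi}_0 \in K$ solving \eqref{rec}. To close the argument, I would verify that this unique $\tilde{\varphi}_0$ matches $\nabla u_0$ in $\omega$: inserting the genuine candidate $(\chi_\omega^{n})^*\chi_\omega^{n}\nabla u_0$ into \eqref{sys.hum} produces a trajectory $\varphi$ whose output $C\varphi(t)$ is compatible with the measurement $z(t)=Cu(t)$, and approximate $G$-observability (Proposition~\ref{prp1}) rules out any other element of $K$ producing the same output, forcing the HUM solution to coincide with $\nabla u_0$ on $\omega$. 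The main obstacle I anticipate is the careful sign and boundary bookkeeping in the Green's formula step, together with the need to reconcile the two uses of $\Theta$ (the one driven by the reconstructed $C\varphi$ that defines $\Lambda$ on the left of \eqref{rec}, and the one driven by the measured $z(t)=Cu(t)$ on the right), and to invoke Lemma~\ref{lemma.new} to pass cleanly between the $\mathcal{K}_\alpha^*$ representation and the $\mathcal{I}_{T^-}^{1-\alpha}\Theta(\cdot,0)$ representation appearing in the definition of $\Lambda$.
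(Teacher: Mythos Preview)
Your proposal is correct and follows essentially the same HUM strategy as the paper: both arguments reduce to the coercivity identity $\langle \Lambda \tilde{\varphi}_0,\tilde{\varphi}_0\rangle=\|C\varphi\|_{L^2(0,T;\mathcal{O})}^2=\|\tilde{\varphi}_0\|_K^2$ and conclude uniqueness from it. The only noteworthy difference is in how that identity is reached: the paper invokes Proposition~3.3 of \cite{zguaid.2021} (equivalently Lemma~\ref{lemma.new}) to write $\mathcal{I}_{T^-}^{1-\alpha}\Theta(0)=\int_0^T\mathcal{S}_\alpha^*(\tau)C^*C\mathcal{S}_\alpha(\tau)\nabla^*\tilde{\varphi}_0\,d\tau$ directly and then pairs with $\nabla^*\tilde{\varphi}_0$, whereas you propose deriving the same expression by applying the fractional Green's formula of Lemma~\ref{lem:FGF} to the dual pair \eqref{sys.hum}--\eqref{sys.hum.aux}; the two routes are equivalent (the cited proposition is itself proved via that Green's formula). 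Your version is in fact more complete than the paper's, since you also spell out the identification of the HUM solution with $\nabla u_0$ in $\omega$ via approximate $G$-observability, a step the paper leaves implicit.
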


\begin{proof}
We shall prove that $\Lambda$ is coercive, that is, 
there exists $\sigma>0$ that verifies $\langle\Lambda v,v\rangle_{_K} 
\geq \sigma\|v\|_{_K}$ for all $v\in K$.
We take $\tilde{\varphi}_0$ to be in $K$,  
\begin{equation*}
\begin{array}{lll}
\langle \Lambda \tilde{\varphi}_0,\tilde{\varphi}_0\rangle_{_K} 
&=& \langle (\chi_{_\omega}^{n})^*\chi_{_\omega}^{n}
\nabla\mathcal{I}_{_{T^-}}^{1-\alpha}
\Theta(0),\tilde{\varphi}_0\rangle_{_K}\\
&=& \langle \mathcal{I}_{_{T^-}}^{1-\alpha}\Theta(0),
\nabla^*\tilde{\varphi}_0\rangle_{_K}.
\end{array}
\end{equation*}
From Proposition~3.3 in \cite{zguaid.2021}, we have that 
$$
\mathcal{I}_{_{T^-}}^{^{1-\alpha}}\Theta(0) 
= \displaystyle\int_{0}^{T}\mathcal{S}_\alpha^*(\tau)
C^*C\mathcal{S}_\alpha\tilde{\varphi}_0 d\tau.
$$
Therefore,
\begin{equation}
\label{prf.th}
\begin{array}{lll}
\langle \Lambda \tilde{\varphi}_0,\tilde{\varphi}_0\rangle_{_K} 
&=& \left\langle \displaystyle\int_{0}^{T}\mathcal{S}_\alpha^*(\tau)
C^*C\mathcal{S}_\alpha(\tau)\nabla^*\tilde{\varphi}_0 d\tau,
\nabla^*\tilde{\varphi}_0\right\rangle_{_K}\\
&=& \displaystyle\int_{0}^{T}\langle C\varphi(\tau),
C\varphi(\tau)\rangle_{_\mathcal{O}} d\tau\\
&=& \|C\varphi(\cdot)\|_{_{L^2(0,T;\mathcal{O})}}^2,
\end{array}
\end{equation}
and equation \eqref{rec} possesses only one solution.
\end{proof}


\section{Applications}
\label{sec:6}

In this section, we take $\Omega = ]0,1[\times]0,1[$. 
Let $\omega\subset\Omega$ be the desired sub-region. 
We consider the following time-fractional system:
\begin{equation}
\label{sys.app}
\left\{\begin{array}{llll}
^{^C}\mathcal{D}_{0^+}^{^\alpha}y(x_1,x_2,t) 
=  \left(\partial_{x_1}^2 + \partial_{x_2}^2 \right)y(x_1,x_2,t),  
& (x_1,x_2,t)\in Q_T, \ \alpha\in]0,1],\\ 
y(\xi_1,\xi_2,t) = 0,  & (\xi_1,\xi_2,t)\in \Sigma_T, \\
y(x_1,x_2,0) = y_0(x_1,x_2), & (x_1,x_2)\in\Omega.
\end{array}\right.
\end{equation} 
Our goal is to illustrate the steps used to recover 
the initial gradient vector $\nabla y_0 = \left(\partial_{x_1}y_0, 
\partial_{x_2}y_0 \right)$ in the sub-region $\omega$. We present 
the method for the two types of sensors introduced in Section~\ref{sec:4}. 
Recall that the eigenvalues and eigenfunctions of 
$\left(\partial_{x_1}^2 + \partial_{x_2}^2 \right)$ are: 
$$
\lambda_{i,j} = -(i^2 + j^2)\pi^2,
$$
and,
$$
\varphi_{i,j}(x_1,x_2) = 2\sin(i\pi x_1)\sin(j\pi x_2).
$$ 


\subsection{Zonal Sensors}

Let us take a zonal sensor $(D,f)$
with $D = [c_1,c_2]\times[c_3,c_4] \subset \Omega$ and, 
$$
f(x_1,x_2) = \cos(\sqrt{3}\pi x_1)\sin(\sqrt{2}\pi x_2).
$$

\begin{proposition}
\label{prp.zon}
This sensor $(D,f)$ is gradient $\omega$-strategic if, and only if,  
$$
\gamma_1 \langle \chi_{_\omega}^*u_1,\varphi_{i,j}\rangle_{_{E}} 
+ \gamma_2 \langle \chi_{_\omega}^*u_2,\varphi_{i,j}\rangle_{_{E}}
= 0, \ \forall i,j\in \mathbb{N}^*\times\mathbb{N}^* 
\implies (u_1,u_2) = (0_{_{L^2(\omega)}},0_{_{L^2(\omega)}}),
$$
where,
$$
\gamma_1 = i\displaystyle\iint_{D}\cos(i\pi x_1)
\sin(j\pi x_2)f(x_1,x_2)dx_1dx_2,
$$
and,  
$$
\gamma_2 = j\displaystyle\iint_{D}\sin(i\pi x_1)
\cos(j\pi x_2)f(x_1,x_2)dx_1dx_2.
$$
\end{proposition}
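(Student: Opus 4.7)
The plan is to specialize Theorem~\ref{th.sens} to the two-dimensional setting $n=2$ with a single zonal sensor $p=1$, using the explicit eigenstructure of the Dirichlet Laplacian on the unit square. I would proceed as follows.

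First, I would re-index so that the spectral sums in Theorem~\ref{th.sens} match the natural double-indexing $\lambda_{i,j}=(i^{2}+j^{2})\pi^{2}$ and $\varphi_{i,j}(x_1,x_2)=2\sin(i\pi x_1)\sin(j\pi x_2)$. Although several pairs $(i,j)$ may yield the same eigenvalue (so the multiplicity $r_j$ in Theorem~\ref{th.sens} is nontrivial), the criterion of Theorem~\ref{th.sens} amounts to requiring that each coefficient of each eigenfunction $\varphi_{i,j}$ in the spectral expansion vanish. Thus the implication to verify becomes: for every $(i,j)\in\mathbb{N}^{*}\times\mathbb{N}^{*}$,
\[
\sum_{s=1}^{2}\langle \chi_{_\omega}^{*}u_{s},\varphi_{i,j}\rangle_{_{E}}\,\langle \partial_{x_{s}}\varphi_{i,j},f\rangle_{_{L^{2}(D)}}=0 \ \Longrightarrow\ (u_{1},u_{2})=(0,0).
\]

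Next, I would carry out the direct differentiations
\[
\partial_{x_{1}}\varphi_{i,j}(x_1,x_2)=2i\pi\cos(i\pi x_1)\sin(j\pi x_2),\qquad \partial_{x_{2}}\varphi_{i,j}(x_1,x_2)=2j\pi\sin(i\pi x_1)\cos(j\pi x_2),
\]
and plug them into the inner products $\langle \partial_{x_{s}}\varphi_{i,j},f\rangle_{_{L^{2}(D)}}$. The common factor $2\pi$ may be pulled out, so after dividing the condition by $2\pi$, the $s=1$ contribution is exactly the quantity $\gamma_{1}\langle \chi_{_\omega}^{*}u_{1},\varphi_{i,j}\rangle_{_{E}}$ with
\[
\gamma_{1}=i\iint_{D}\cos(i\pi x_1)\sin(j\pi x_2)\,f(x_1,x_2)\,dx_1 dx_2,
\]
and analogously for $s=2$ one obtains $\gamma_{2}\langle \chi_{_\omega}^{*}u_{2},\varphi_{i,j}\rangle_{_{E}}$ with the symmetric expression for $\gamma_{2}$. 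This exactly matches the form stated in the proposition.

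Finally, I would wrap up by invoking Theorem~\ref{th.sens} to conclude that the strategic property is equivalent to the stated implication; the converse direction is a direct reading of the same equivalence. The only delicate point, and the step I expect to need care, is handling eigenvalue multiplicities: different pairs $(i,j)$ and $(i',j')$ with $i^{2}+j^{2}={i'}^{2}+{j'}^{2}$ produce distinct eigenfunctions sharing one eigenvalue, and one must verify that the criterion of Theorem~\ref{th.sens} applied to such a block reduces to requiring the vanishing of each individual coefficient. This is however automatic, because the eigenfunctions $\{\varphi_{i,j}\}$ form an orthonormal basis of $E$ and the matrix $M_{j}^{s}$ in Theorem~\ref{th.sens} has one row (since $p=1$) with columns indexed by the eigenfunctions in the eigenspace, so the implication forces each scalar coefficient separately to vanish. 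Once this is observed, the proof amounts to the elementary computation of $\gamma_{1}$ and $\gamma_{2}$ above.
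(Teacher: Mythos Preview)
Your overall strategy coincides with the paper's: specialize Theorem~\ref{th.sens} to $n=2$, $p=1$, compute $\partial_{x_s}\varphi_{i,j}$ explicitly, and read off $\gamma_1,\gamma_2$ after discarding the common factor $2\pi$. The paper's proof does exactly this, simply declaring ``$p=1$, $r_j=1$, $n=2$'' and then matching the matrices $M_{ij}^s$ and vectors $u_{ij}^s$ to the stated $\gamma_1,\gamma_2$.

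Where you go further than the paper is in flagging the multiplicity issue, and you are right to do so: on the unit square one has, e.g., $\lambda_{1,2}=\lambda_{2,1}=5\pi^2$, so the paper's assertion $r_j=1$ is false. However, your proposed fix is not correct. With $p=1$ and an eigenspace of dimension $r>1$, the object $M_j^s$ in Theorem~\ref{th.sens} is a $1\times r$ row and $\sum_s M_j^s y_j^s$ is a \emph{single} scalar equation per distinct eigenvalue, namely the sum over all eigenfunctions in that eigenspace. It does \emph{not} force each individual $(i,j)$-term to vanish. Consequently, the hypothesis of the implication in the proposition (all per-$(i,j)$ equations vanish) is strictly stronger than the hypothesis Theorem~\ref{th.sens} actually delivers (only the eigenspace sums vanish). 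The ``only if'' direction of the proposition still follows, but the ``if'' direction does not follow from Theorem~\ref{th.sens} once multiplicities are acknowledged. This gap is shared by the paper's own proof, which hides it behind the incorrect claim $r_j=1$; your computational steps are otherwise fine and match the paper line for line.
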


\begin{proof}
In this case, $p=1$, $r_j=1$, and $n=2$. Hence,
$$
M_{ij}^{s} = \left( \langle \partial_{x_s}\varphi_{i,j},
f\rangle_{_{L^2(D)}} \right)_{_{1\times 1}} \ 
\mbox{ and } \ u_{ij}^s = \langle \chi_{_\omega}^*u_s,
\varphi_{i,j} \rangle_{_{E}}, 
\ \forall i,j\in\mathbb{N}^*\times\mathbb{N}^*, 
\ s=\left\{1,2\right\}.
$$
One can see that:
$$
\langle \partial_{x_1} \varphi_{i,j},f\rangle_{_{L^2(D)}} 
= 2i\pi\displaystyle\iint_{_D}
\cos(i\pi x_1)\sin(j\pi x_2)f(x_1,x_2)dx_1 dx_2,
$$ 
and,
$$ 
\langle \partial_{x_2} \varphi_{i,j},f\rangle_{_{L^2(D)}} 
= 2j\pi\displaystyle\iint_{D}\sin(i\pi x_1)
\cos(j\pi x_2)f(x_1,x_2)dx_1dx_2.
$$
Hence, using Theorem~\ref{th.sens}, $(D,f)$ 
is gradient $\omega$-strategic if, and only if,
$$
M_{ij}^1u_{ij}^1 + M_{ij}^2u_{ij}^2 =0, 
\ i,j\in \mathbb{N}^*\times\mathbb{N}^* 
\implies (u_1,u_2)= (0_{_{L^2(\omega)}},0_{_{L^2(\omega)}}),
$$
that is, if, and only if,
$$
\gamma_1 \langle \chi_{_\omega}^*u_1,\varphi_{i,j}
\rangle_{_{E}} + \gamma_2 \langle 
\chi_{_\omega}^*u_2,\varphi_{i,j}\rangle_{_{E}} = 0, 
\ \forall i,j\in \mathbb{N}^*\times\mathbb{N}^* 
\implies (u_1,u_2) = (0_{_{L^2(\omega)}},0_{_{L^2(\omega)}}).
$$
The proof is complete.
\end{proof}

Let us now introduce the set:
$$
\tilde{K} = \left\{f \in (E)^2 \ | \ f= 0 
\ \mbox{ in } \ \Omega\setminus\omega\right\}
\bigcap\left\{\nabla h \ | \ h \in H^1_0(\Omega)\right\}.
$$
Using Lemma~\ref{lemma.prod}, for any $h = (h_1,h_2)$ 
and $g = (g_1,g_2)$ in $\tilde{K}$, the expression:
$$
\langle h,g \rangle_{_{\tilde{K}}} 
= \displaystyle\int_{0}^{T}\langle \mathcal{S}_\alpha(t)
\sum_{s=1}^{2}\partial_{x_s} h_s,f \rangle_{_{L^2(D)}}\langle 
\mathcal{S}_\alpha(t)\sum_{s=1}^{2}
\partial_{x_s}g_s,f \rangle_{_{L^2(D)}}dt,
$$
defines a scalar product whenever the system \eqref{sys.app} 
is approximately $G$-observable in $\omega$ and, 
$$
\|h\|_{_{\tilde{K}}} = \left(\displaystyle
\int_{0}^{T}\langle \mathcal{S}_\alpha(t)
\sum_{s=1}^{2}\partial_{x_s} h_s,
f \rangle_{_{L^2(D)}}^2dt\right)^{\frac{1}{2}},
$$
is the associated norm. Keeping in mind formula \eqref{C*z}, 
we can write the adjoint system as follows: 
\begin{equation*}
\left\{\begin{array}{rlll}
^{^{RL}}\mathcal{D}_{T^-}^{^\alpha}\psi_1(x_1,x_2,t) 
&=& \left(\partial_{x_1}^2 + \partial_{x_2}^2 \right)
\psi_1(x_1,x_2,t)  & (x_1,x_2,t)\in Q_T,\\ 
&  &  - \chi_{_{D}}f(x_1,x_2)\langle \mathcal{S}_\alpha(t)
\sum_{s=1}^{2}\partial_{x_s} 
(\chi_{_\omega}^* h_s),f \rangle_{_{L^2(D)}}, & \alpha\in]0,1],\\
\psi_1(\xi_1,\xi_2,t)& =& 0,  & (\xi_1,\xi_2,t)\in \Sigma_T, \\
\lim\limits_{t\rightarrow T^-}\mathcal{I}_{_{T^-}}^{^{1-\alpha}}
\psi_1(x_1,x_2,t) &=& 0, & (x_1,x_2)\in\Omega.
\end{array}\right.
\end{equation*} 
It follows from Theorem~\ref{th.main} that the equation 
$\Lambda h = (\chi_{_\omega}^{n})^*\chi_{_\omega}^{n}
\nabla\mathcal{I}_{_{T^-}}^{1-\alpha}\psi_1(0)$ 
possesses one and only one solution in $\tilde{K}$.


\subsection{Pointwise Sensors}

Now we reconsider system \eqref{sys.app} but
augmented with the output: 
\begin{equation}
\label{exp.p}
z(t) = y(b_1,b_2,t),
\end{equation}
where $(b_1,b_2)$ is the sensor location. 
Hence, from \eqref{sol} and \eqref{exp.p}, we have,
$$
C\mathcal{S}_\alpha(t)y_0 
= \displaystyle\sum_{i,j=1}^{+\infty}
E_{\alpha}(-\lambda_{i,j}t^{\alpha})\langle y_0,
\varphi_{i,j}\rangle\varphi_{i,j}(b_1,b_2).
$$
Note that the pointwise sensor has an unbounded observation operator. 
Since $|\varphi_{i,j}|\leq 2$ for all $i,j\in\mathbb{N}^*\times\mathbb{N}^*$, 
$E_\alpha(\cdot)$ is continuous and $\exists C>0$ such that 
$|E_\alpha(-\lambda_{i,j}t^\alpha)|\leq \dfrac{C}{1 + |\lambda_{i,j}|t^\alpha}$ 
(see \cite{regional.analysis}). Therefore, the admissibility 
condition \eqref{adm.cnd} is satisfied for the pointwise sensor. 

\begin{proposition}
The pointwise sensor $\left((b_1,b_2),\delta_{(b_1,b_2)}\right)$ 
is gradient $\omega$-strategic if, and only if, 
\begin{multline*}
i\cos(i\pi b_1)\sin(j\pi b_2) \langle \chi_{_\omega}^*u_1,
\varphi_{i,j}\rangle_{_{E}} + j\sin(i\pi b_1)
\cos(j\pi b_2) \langle \chi_{_\omega}^*u_2,\varphi_{i,j}\rangle_{_{E}} = 0,\\
\forall i,j\in \mathbb{N}^*\times\mathbb{N}^*
\implies (u_1,u_2) = (0_{_{L^2(\omega)}},0_{_{L^2(\omega)}}).
\end{multline*}
\end{proposition}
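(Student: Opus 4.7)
The plan is to repeat almost verbatim the argument used in the proof of Proposition~\ref{prp.zon}, only replacing the $L^2$-pairing form of the matrix entries with the pointwise form $\varphi_{j,k}^{i,s} = \partial_{x_s}\varphi_{j,k}(b_i)$ supplied by Theorem~\ref{th.sens}. Before invoking that theorem I would first record that its hypotheses are met: the admissibility paragraph just preceding the statement already combines the uniform bound $|\varphi_{i,j}|\leq 2$ with the standard decay estimate $|E_\alpha(-\lambda_{i,j}t^\alpha)|\leq \dfrac{C}{1+|\lambda_{i,j}|t^\alpha}$ to verify \eqref{adm.cnd} for the unbounded observation operator $C = \delta_{(b_1,b_2)}$, so Theorem~\ref{th.sens} is legitimately applicable.

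Once Theorem~\ref{th.sens} is in force, I would read off the specific data: $n=2$, $p=1$, and the eigenvalues $\lambda_{i,j} = -(i^2+j^2)\pi^2$ are treated as simple, so each multiplicity $r_{(i,j)}$ equals one, each scalar $M_{ij}^s$ coincides with the single relevant derivative, and the inner sum over $k$ collapses to one term. A direct differentiation of $\varphi_{i,j}(x_1,x_2) = 2\sin(i\pi x_1)\sin(j\pi x_2)$ followed by evaluation at $(b_1,b_2)$ then gives
$$
M_{ij}^1 = 2i\pi\cos(i\pi b_1)\sin(j\pi b_2), \qquad M_{ij}^2 = 2j\pi\sin(i\pi b_1)\cos(j\pi b_2),
$$
while by definition $u_{ij}^s = \langle \chi_{_\omega}^*u_s,\varphi_{i,j}\rangle_{_E}$ for $s\in\{1,2\}$.

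Substituting these expressions into the characterisation $\sum_{s=1}^{n}M_{ij}^s u_{ij}^s = 0 \Rightarrow (u_1,u_2) = (0,0)$ provided by Theorem~\ref{th.sens}, and pulling out the common multiplicative factor $2\pi$ (which is harmless for the implication), produces exactly the statement announced. I do not expect any genuine obstacle: the argument is essentially a mechanical transcription of the zonal proof, with the only delicate point being the admissibility check, which however is already disposed of in the short paragraph immediately above the proposition. The only minor bookkeeping is to confirm that indexing eigenpairs by the pair $(i,j)$ rather than by a single index $j$ does not alter the validity of Theorem~\ref{th.sens}, since that theorem is stated in a form that allows arbitrary relabellings of the eigenbasis.
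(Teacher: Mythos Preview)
Your proposal is correct and follows exactly the route the paper intends: the paper's own proof consists of the single line ``Similar to the proof of Proposition~\ref{prp.zon},'' and what you have written is precisely that computation carried out with the pointwise form $\varphi_{j,k}^{i,s} = \partial_{x_s}\varphi_{j,k}(b_i)$ from Theorem~\ref{th.sens}. Your additional remarks on admissibility and on the $(i,j)$-indexing are sound but go beyond what the paper itself spells out.
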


\begin{proof}
Similar to the proof of Proposition~\ref{prp.zon}.
\end{proof}

For any $h=(h_1,h_2)$ in $\tilde{K}$, if the system \eqref{sys.app} 
is approximately $G$-observable in $\omega$, then: 
$$
\|h\|_{_{\tilde{K}}} 
= \left( \displaystyle\int_{0}^{T} \left(\mathcal{S}_\alpha(t)
\sum_{s=1}^{2} \partial_{x_s}(\chi_{_\omega}^*h_s)\right)^2(b_1,b_2) 
dt\right)^{\frac{1}{2}},
$$
defines a norm in $\tilde{K}$. Let us write the adjoint system:
\begin{equation*}
\left\{\begin{array}{rlll}
^{^{RL}}\mathcal{D}_{T^-}^{^\alpha}\psi_2(x_1,x_2,t) 
&=& \left(\partial_{x_1}^2 + \partial_{x_2}^2 \right)\psi_2(x_1,x_2,t)  
& (x_1,x_2,t)\in Q_T,\\ 
&  &  - \delta_{(b_1,b_2)}(x_1,x_2)\left(\mathcal{S}_\alpha(t)
\displaystyle\sum_{s=1}^{2}\partial_{x_s}(\chi_{_\omega}^*h_s)\right)(b_1,b_2), 
& \alpha\in]0,1],\\
\psi_2(\xi_1,\xi_2,t)& =& 0,  & (\xi_1,\xi_2,t)\in \Sigma_T, \\
\lim\limits_{t\rightarrow T^-}\mathcal{I}_{_{T^-}}^{^{1-\alpha}}\psi_2(x_1,x_2,t) 
&=& 0, & (x_1,x_2)\in\Omega.
\end{array}\right.
\end{equation*} 
It follows from Theorem~\ref{th.main} that the equation 
$\Lambda h = (\chi_{_\omega}^{n})^*\chi_{_\omega}^{n}
\nabla\mathcal{I}_{_{T^-}}^{1-\alpha}\psi_2(0)$ 
has one and only one solution.


\section{Numerical Simulations}
\label{sec:7}

In this section, we illustrate the adopted method for solving 
the gradient reconstruction problem by presenting
two examples that show its efficiency. In order to solve 
equation \eqref{rec}, we calculate the components of the operator 
$\Lambda$ for some orthonormal basis 
$\left\{\overline{\varphi}_i\right\}_{_{i\in \mathbb{N}^*}}$ 
of $E^n$, denoted by:
$$
\Lambda_{ij}:= \langle \Lambda\overline{\varphi}_i,
\overline{\varphi}_j\rangle_{_{(E)^n}}. 
$$
We know that $\left\{\varphi_{i}\right\}_{_{i\in \mathbb{N}^*}}$ 
is an orthonormal basis of $E$. Then, by setting 
$\overline{\varphi}_{i,k} = \left(0,\ldots,\varphi_{i},0,\ldots,0\right)\in E^{^n}$, 
where $\varphi_{i}$ is at the $k$-th place, we have that 
$\left\{\overline{\varphi}_{i,k}\right\}_{\substack{i\geq 1 \\ 1\leq k \leq n }}$ 
is an orthonormal basis of $E^n$. From now on, by rearranging the terms, we denote
$\left\{\overline{\varphi}_{i,k}\right\}_{\substack{  i\geq 1 \\ 1\leq k \leq n }}$ 
by $\left\{\overline{\varphi}_{i}\right\}_{i\in\mathbb{N}^*}$. This is possible 
since the mapping:
$$
\begin{array}{lllll}
g & : & \mathbb{N}^*\times \segN{1}{N} 
& \longrightarrow & \mathbb{N}^*,\\
& & \hfill (q,d)& \longmapsto & n(q-1)+d,
\end{array}
$$  
is one to one. The equation \eqref{rec} can now be approximated by 
\begin{equation}
\label{app.rec}
\displaystyle\sum_{l=1}^{M}\Lambda_{il}\tilde{\varphi}_{0,l} 
= \tilde{\Theta}_{i}, \quad i = 1,\ldots,M, 
\end{equation}
with $M\in\mathbb{N}^*$, $\tilde{\varphi}_{0,l} 
= \langle \tilde{\varphi}_0, \overline{\varphi}_l \rangle_{_{E^n}}$, 
and $\tilde{\Theta}_i = \langle (\chi_{_\omega}^{n})^*\chi_{_\omega}^{n}
\nabla\mathcal{I}_{_{T^-}}^{1-\alpha}
\Theta(0),\overline{\varphi}_i \rangle_{_{(E)^n}}$.
We know that:
\begin{equation}
\label{CC}
C\mathcal{S}_\alpha(t)\nabla^*\overline{\varphi}_i 
= \displaystyle\sum_{k,l = 1}^{\infty} E_\alpha(-\lambda_{k,l}
t^\alpha) \langle \nabla^*\overline{\varphi}_i,
\varphi_{k,l} \rangle_{_{E}}C\varphi_{k,l},
\end{equation}
and, from  \eqref{prf.th} and \eqref{CC}, we obtain that:
\begin{equation*}
\begin{array}{llll}
\langle \Lambda\overline{\varphi}_i,\overline{\varphi}_j
\rangle_{_{(E)}}& = & \displaystyle\sum_{k,l,r,s=1}^{\infty}
\int_{0}^{T} E_\alpha(-\lambda_{k,l}t^\alpha)
E_\alpha(-\lambda_{r,s}t^\alpha)dt \langle \nabla^*\overline{\varphi}_i,
\varphi_{k,l} \rangle_{_{E}}\langle \nabla^*\overline{\varphi}_j,
\varphi_{r,s} \rangle_{_{E}}C\varphi_{k,l}C\varphi_{r,s}.
\end{array}
\end{equation*}
To sum up, the reconstruction method is given
by the Algorithm~\ref{alg1}.

\medskip

\begin{algorithm}[H]	
\SetAlgoLined
\vspace*{0.5cm}
\begin{description}
\item[\textbf{Step 1. }] Initialization of Data: 
threshold accuracy $\varepsilon$, the initial guess 
of $\tilde{\varphi}_0$, $\alpha$, sensors properties.
		
\item[\textbf{Step 2. }] Repeat:
		
\begin{description}
\item[\textbullet] Solve \eqref{sys.hum.aux} 
and get $\mathcal{I}_{_{T^-}}^{1-\alpha}\Theta(0)$.
			
\item[\textbullet] Get the components of $\Lambda$ 
($\Lambda_{ij}$) and $\tilde{\Theta}_i(0)$.
			
\item[\textbullet] Solve \eqref{app.rec} and obtain 
$\tilde{\varphi}_{0,j}$, then get $\tilde{\varphi}_0$.
\end{description}
Until: $\|z(\cdot) -C\varphi(\cdot)\|_{_{L^{^2}(0,T;\mathcal{O})}} \leq \varepsilon$.
	
\item[\textbf{Step 3.}] Take $\tilde{\varphi}_0$ 
to be the reconstructed initial gradient vector in $\omega$.
\end{description}
\caption{\label{alg1} Solution to the gradient reconstruction problem.}
\end{algorithm}


\subsection{Example with a Pointwise Sensor}
\label{sec:ex1}

In our first example, we take $\Omega = ]0,1[$ 
and we consider the following time-fractional system:
\begin{equation}
\label{sys.exp.1}
\left\{\begin{array}{llll}
^{^C}\mathcal{D}_{0^+}^{^{0.84}}x(y,s) 
=  \partial_y^{^2}x(y,s),  & (y,s)\in Q_1, \\ 
x(0,s)=x(1,s) = 0,  & s\in[0,1], \\
x(y,0) = x_0(y), & y\in\Omega,
\end{array}\right.
\end{equation}
where the output function corresponds with the sensor 
$(b,\delta_b)$ with $b= \{0.2\}$. We set $\omega = [0.00\ ,\ 0.25 ]$ 
to be the desired subregion and $g(y) = 2\pi\left(\cos(y\pi)^2 
- \sin(y\pi)^2\right)\cos(y\pi)\sin(y\pi)$ to be the initial 
gradient vector that will be reconstructed in $\omega$, whereas 
the initial state, supposedly unknown, is 
$x_0(y) = \left(\cos(y\pi)\sin(y\pi)\right)^2$. After implementing 
the proposed algorithm (Algorithm~\ref{alg1}), we obtain the 
reconstructed initial gradient $\tilde{\varphi}_0$. As we can see 
in Figure~\eqref{point}, the two graphs of the initial gradient 
vector and the recovered one are neighbor to one another 
in the desired region $\omega$ with a reconstruction error:
$$
\|g - \tilde{\varphi}_0 \|^{^2}_{_{L^{^2}(\omega)}} = 9.47\times10^{-4}.
$$
\begin{figure}
\center{\includegraphics[width=\textwidth]{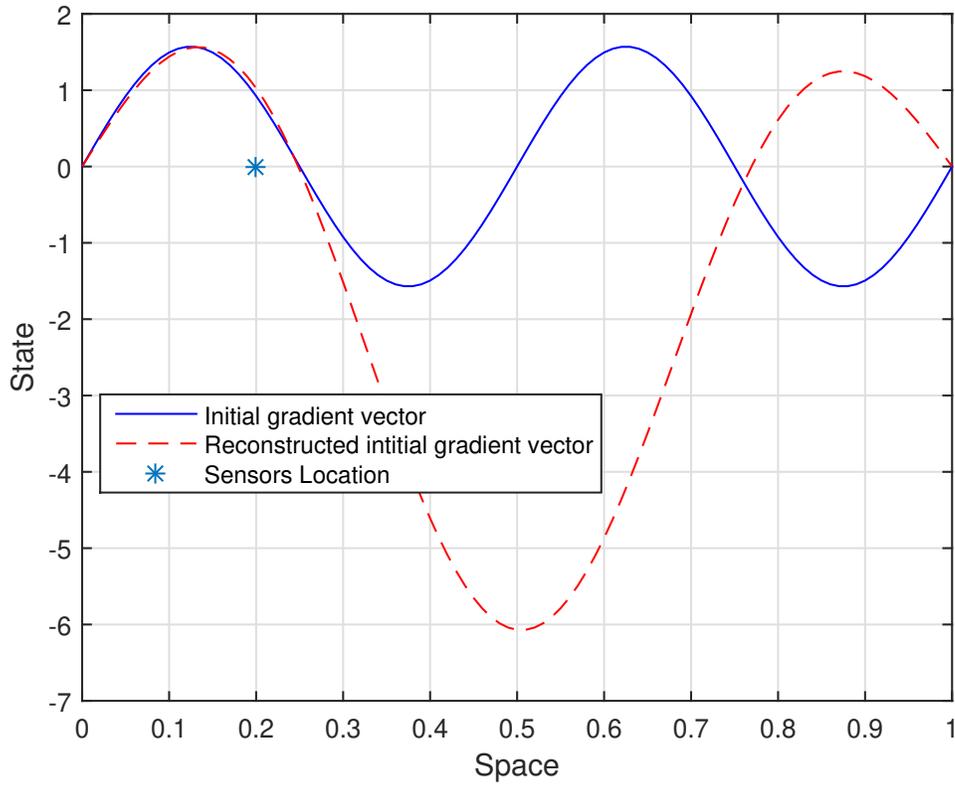}}
\caption{\label{point} The initial gradient vector 
and the reconstructed one in $\Omega$ for the example
of Section~\ref{sec:ex1}.}
\end{figure}
This shows that the numerical approach is successful. 
We remark that the proposed algorithm does not put 
into consideration the value of the initial gradient 
outside of $\omega$.
  
Figure~\ref{point.err} portrays the manner in which 
the error evolves in terms of the placement of the sensor. 
As it is seen, there are many positions where the error 
is large or even explodes to infinity. In this case, we say 
that the sensor is non-strategic in $\omega$. Moreover, it is 
clear that the optimal location of the sensor, in the sense 
that it gives the minimum value of the reconstruction error, 
is $b=0.2$. 
\begin{figure}
\center{\includegraphics[width=\textwidth]{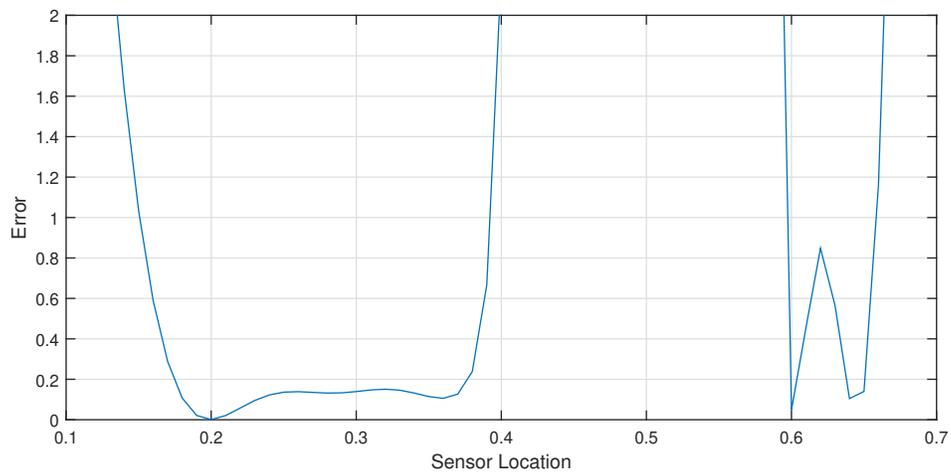}}
\caption{\label{point.err} Reconstruction error \emph{versus} 
sensors location for the example of Section~\ref{sec:ex1}.}
\end{figure}


\subsection{Example with a Zonal Sensor}
\label{sec:ex2}

Let us now consider the following fractional system:
\begin{equation}
\label{sys.exp.2}
\left\{
\begin{array}{llll}
^{^C}\mathcal{D}_{0^+}^{^{0.5}}x(y,s) 
= \partial_y^2x(y,s),  & (y,s)\in Q_2, \\ 
x(0,s)=x(1,s) = 0,  & s\in[0,2], \\
x(y,0) = x_0(y), & y\in\Omega,
\end{array}\right.
\end{equation}
and take the measurements with a zonal sensor $(D,f)$ 
with $D = [0.9\ ,\ 1.0]$, $f=\chi_{_{D}}$, and the subregion 
$\omega = [0.35\ ,\ 0.65]$. We choose $x_0(y) = (y(1-y))^2$ 
to be the initial state and the gradient to be recovered as
$g(y) = 2y(1-y)(1-2y)$, which are both supposed to be unknown. 
We see in Figure~\ref{zone} that the plot of the initial gradient 
vector is nearly identical to the plot of the reconstructed initial 
gradient. In fact, the reconstruction error takes the value:
$$
\|g - \tilde{\varphi}_0 \|^2 = 1.26\times10^{-6}.
$$
\begin{figure}
\center{\includegraphics[width=\textwidth]{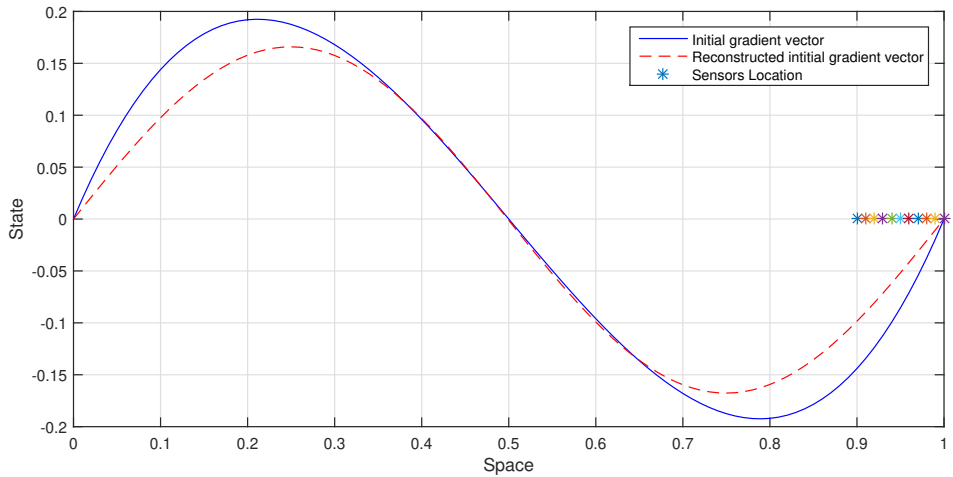}}
\caption{\label{zone} The initial gradient vector 
and the reconstructed one in $\Omega$  
for the example of Section~\ref{sec:ex2}.}
\end{figure}
As it can be seen in the two examples, 
the case of a zonal sensor gives numerical results with 
better and smaller reconstruction errors in comparison 
with the case when we consider a pointwise sensor. This might be due to
the fact that, in that case, the sensor has a bounded observation 
operator and a geometrical support with a non-vanishing Lebesgue measure, 
which means that the measurements are given in a much larger set compared 
with the case of a pointwise sensor, where the measurements are provided 
in a single point $b$, meaning that the quantity of 
obtained measurements is much less in this case. Therefore, 
in the case of a bounded observation operator, one has more information 
on the system than the one with an unbounded operator. 
These remarks are based upon the observations 
made during the implementation of the proposed algorithm, 
but more theoretical studies are needed, regarding the theory 
of strategic sensors, to confirm and validate, theoretically,
the observations of our numerical simulations.


\section{Conclusion}
\label{sec:8}

We dealt with the problem of regional gradient observability 
of linear time-fractional systems given in terms of the Caputo 
derivative. We developed a method that allows us to obtain the 
initial gradient vector in the desired region $\omega$. We also 
gave a complete characterization of the regional gradient observability 
by means of gradient strategic sensors. Even though we studied two particular 
cases of sensors, namely pointwise and zonal, similar results can also 
be obtained for other kinds of sensors, for instance filament ones. 
The numerical simulations presented in this paper are very satisfying 
regarding the error rate and the computation time. We implemented 
the considered examples using the software Matlab R2014b on a 2.5GHz 
core i5 computer with 8 GB of RAM. 

The strength of the HUM approach lies in fact that it can be simulated numerically, 
providing the regional initial gradient with a satisfying control of the error. 
Moreover, it can be adapted to real-world applications. One weakness 
that can be faced while applying this approach to an example 
happens when one considers a dynamic $A$ that possesses an eigenvalue 
with infinite multiplicity. In such a case, one needs an infinite number 
of sensors to observe the system, which can never be achieved in reality.
For future work, we plan to extend the results of this paper 
to the case of semilinear fractional systems. 
Regarding the numerical simulations, we have
considered here some academic examples in order to illustrate 
the obtained theoretical results. We claim that our results 
can be applied and useful to real-world situations, 
a question that will be addressed elsewhere.


\subsubsection*{Funding}

Partial financial support was received from 
Portuguese Foundation for Science and Technology (FCT) 
within project UIDB/04106/2020 (CIDMA). 

\subsubsection*{Author Contributions}

All authors contributed to this work,
commented on previous versions of the manuscript, 
read and approved the final manuscript.
Conceptualization: Fatima-Zahrae El Alaoui; 
Methodology: Khalid Zguaid, Fatima-Zahrae El Alaoui and Delfim F. M. Torres; 
Formal analysis and investigation: Khalid Zguaid, Fatima-Zahrae El Alaoui and Delfim F. M. Torres; 
Writing - original draft preparation: Khalid Zguaid, Fatima-Zahrae El Alaoui and Delfim F. M. Torres; 
Writing - review and editing: Khalid Zguaid, Fatima-Zahrae El Alaoui and Delfim F. M. Torres; 
Funding acquisition: Khalid Zguaid, Fatima-Zahrae El Alaoui and Delfim F. M. Torres; 
Resources: Khalid Zguaid, Fatima-Zahrae El Alaoui and Delfim F. M. Torres; 
Supervision: Fatima-Zahrae El Alaoui and Delfim F. M. Torres.

\subsubsection*{Acknowledgements}

This research is part of first author's Ph.D., which is carried out 
at Moulay Ismail University, Meknes. Zguaid is grateful to the 
financial support of Moulay Ismail University, Morocco, and CIDMA, Portugal,
for a one-month visit to the Department of Mathematics of University of Aveiro, 
on November and December of 2021. The hospitality of the host institution 
is here gratefully acknowledged. The authors are very grateful to two anonymous 
referees for their suggestions and invaluable comments.

\subsubsection*{Competing Interest}

The authors have no relevant financial or non-financial interests to disclose.



\end{document}